\numberwithin{equation}{section}
\theoremstyle{plain}
\newtheorem{theorem}{Theorem}[section]
\newtheorem{lemma}{Lemma}[section]
\newtheorem{proposition}{Proposition}[section]
\newtheorem{corollary}{Corollary}[section]
\theoremstyle{definition}
\theoremstyle{remark}
\newtheorem{remark}{Remark}[section]
\newcommand{\calL}{\mathcal{L}}
\newcommand{\abs}[1]{\left| #1 \right|}
\newcommand{\qnlbilinear}{b^{\rm{qnl}}_\del }
\def\R{\mathbb{R}}
\def\cS{\mathcal{S}}
\newcommand{\beq}{\begin{equation}}
\newcommand{\eeq}{\end{equation}}
\newcommand{\ep}{{\epsilon}}
\newcommand{\ga}{{\gamma}}
\newcommand{\del}{\delta}
\newcommand{\Om}{\Omega}
\newcommand{\om}{\omega}
\newcommand{\bm}[1]{\mathbf{#1}}
\definecolor{docol}{rgb}{0.4, 0, 0}
\title[quasinonlocal coupling]{A quasinonlocal coupling method for nonlocal and local diffusion models}
\thanks{{The research of Q. Du and X. Tian is supported in part by the
  U.S.~NSF grants DMS-1719699, AFOSR grant FA9550-14-1-0073 MURI
  Center for Material Failure Prediction through peridynamics and the
  ARO MURI Grant W911NF-15-1-0562.  The work of X. Li is supported in
  part by the Simons Collaboration Grant with Award ID: 426935 and NSF DMS-1720245. The work of J. Lu is supported in part by the National
  Science Foundation under award DMS-1454939.}}
\author{Qiang Du}
\address{Department of Applied Physics and Applied Mathematics,
Columbia University, New York, NY 10027. \email{qd2125@columbia.edu}.}
\author{Xingjie Helen Li}
\address{Department of Mathematics and Statistics, University of North
  Carolina at Charlotte, Charlotte NC 28223.  \email{xli47@uncc.edu}.}
\author{Jianfeng Lu}
\address{Department of Mathematics, Department of
  Physics, Department of Chemistry, Duke University, Box 90320, Durham, NC 27708. \email{jianfeng@math.duke.edu}.}
\author{Xiaochuan Tian}
\address{Department of Applied Physics and Applied Mathematics,
Columbia University, New York, NY 10027. \email{xt2156@columbia.edu}. Current address: Department of Mathematics, University of Texas at Austin, Austin, TX 78712.
 \email{xtian@math.utexas.edu}.}
\begin{document}

\begin{abstract}
  In this paper, we extend the idea of ``geometric reconstruction''
  to couple a nonlocal diffusion model directly with the classical local
  diffusion in one dimensional space. This new coupling framework
  removes interfacial inconsistency, ensures the flux balance, and satisfies
   energy conservation as well as
  the maximum principle, whereas none of existing coupling methods
 for nonlocal-to-local
  {coupling} satisfies all of these properties.  We establish the
  well-posedness and provide the stability analysis of the coupling method.
 We investigate the difference to the local limiting problem in terms of the {nonlocal interaction range}.
   Furthermore, we propose a
  first order finite difference numerical discretization and
  perform several numerical tests to confirm the theoretical findings. In
  particular, we show that  the resulting numerical result is free of artifacts near
  the boundary of the domain {where} a classical local boundary condition
  is used, together with a coupled fully nonlocal model in the interior of the
  domain.
\end{abstract}

\keywords{
Nonlocal diffusion, quasinonlocal coupling, geometric reconstruction, modeling error estimate, well-posedness, physics-preserving
}

\subjclass{}
\maketitle

\section{Introduction}\label{sec_intro}

Nonlocal continuum models
have found interesting applications
in a number of important scientific and engineering problems, for example, the phase transition
\cite{Bates1999,Fife2003}, the nonlocal heat conduction
\cite{Bobaru2010b}, fracture and damage in brittle solids
\cite{Silling2000}. Meanwhile, they can often be linked to
 classic local continuum models where the latter are known to hold \cite{Planas2002,Silling2005,Chasseigne2006a,Silling2008,Bobaru2010a,Lehoucq2010,zhou2010a,Bobaru2011a,Mengesha2014a,Du2012a,Du2013a,Kriventsov,NewsDu,Lipton2014a,Lipton2016a}.

While nonlocal integral-type formulations in a nonlocal continuum model can often
provide a more accurate description of physical systems, especially near defects and singularities, the nonlocality
also increases the computational cost, compared to classical local models
based on partial differential equations (PDEs).
As a result,
it is imperative to employ multiscale methods which can retain accuracy around defect cores while improving efficiency away from
singularities through local continuum descriptions.
In addition, the nonlocal models usually bring
modeling challenges near the boundary, as volumetric boundary
conditions are needed that require additional calibrations
with the physical system.
Improper boundary conditions may create unintended modeling error \cite{Tart2015,DuTao2016,Tian2016b}.
It is thus interesting to explore alternatives that enable the use of
the usual local boundary conditions.

In the past ten years, a number of strategies have been proposed to
couple together local-to-nonlocal or two nonlocal continuum models with
different nonlocality. These coupling methods include (1) Arlequin type domain
decomposition (see e.g.,
\cite{prudhomme:modelingerrorArlequin,Han2012}); (2) Optimal-control
based coupling (see e.g., \cite{Delia2015a}); (3) Morphing
approach (see e.g., \cite{Lub2012a}); (4) Force-based blending mechanism
(see e.g., \cite{Seleson2013a,Seleson2015a}); and (5) Energy-based blending mechanism (see e.g., \cite{Bobaru2008,Silling2015a,Tian2016a}); just to name a few.
Among these multiscale models, some exhibit spurious interfacial forces (``ghost forces'') under uniform strain,
while others forgo the need for energy and develop consistent force-based methods which are non-conservative.

Recently, a new symmetric, consistent and stable coupling strategy for
nonlocal diffusion problems was developed in \cite{LiLu2016} that
couples two nonlocal operators with different horizon parameters
$\delta_1$ and $\delta_2$. The crucial step in the formulation is the
idea of ``geometric reconstruction'' from the
quasinonlocal atomistic-to-continuum method for crystalline solids
(see e.g.,
\cite{Shimokawa:2004,E:2006,MingYang,Shapeev2012a,luskin2011,OrtnerZhang}).
In this paper, we extend the ``geometric reconstruction'' idea to
couple the nonlocal diffusion directly with the classical local
diffusion in one dimensional space. This new framework leads a coupled model
that enjoys linear consistency and preserves the maximum
principle. Furthermore, well-posedness of the coupling problem,
stability analysis and  error estimates are established in this work
to ensure the validity and reliability of the modeling approach and computational results.

Let  us first review {\itshape nonlocal diffusion} equations associated with a positive number $\del$ that characterizes the finite range of nonlocal interaction. We refer to \cite{Du2012a} for more detailed studies on nonlocal diffusion equations. Generically, the spatial
interactions in a linear nonlocal diffusion equation
are characterized by a linear operator $\calL_\del$ acting on a function $u=u(\mathbf{x}):\mathbb{R}^d \rightarrow \mathbb{R}$ such that
\beq
\calL_\del u(\bm x)=2\int_{\R^d}(u(\bm y)-u(\bm x))\ga_\del(\bm x, \bm y) d\bm y, \quad \forall \bm x\in\Om\;,
\eeq
for some open domain $\Om\subset\R^d$. The kernel $\ga_\del$ is usually nonnegative, symmetric and {translational} invariant for isotropic systems.
Often it is {chosen} as a radial function with a compact support,  i.e., $\ga_\del(\bm x, \bm y) =\ga_\del(|\bm x-\bm y|)$ and
$\text{supp}(\ga_\del)\subset B_\del({\bf 0})$, where $B_\del({\bf 0})$ is the $d$-dimensional ball of radius $\del$.
The constant $\del>0$ is often called a horizon parameter that characterizes the range of nonlocality.
We note that the operator $\calL_\del$ can be written in the form of $\calL_\del=\mathcal{D}\gamma_\del \mathcal{D}^*$ where $\mathcal{D}$ and $\mathcal{D}^*$ are some basic nonlocal  operators defined in a nonlocal vector calculus given in \cite{Du2013a}. Such a formulation naturally draws an analogy between the nonlocal operator $\calL_\del$ and the local second order elliptic differential operator {$\nabla\cdot(\bf{C} \nabla)$}. Thus the nonlocal diffusion problems
can be studied and compared with the classical diffusion problems. The nonlocal equations defined on the domain $\Om$
are complemented by the ``Dirichlet type'' boundary conditions, which are constraints on a domain with nonzero $d$-dimensional volume. Thus we arrive at
 the steady-state nonlocal volume-constrained diffusion problem:
\beq \label{eq:ND_steady}
\begin{cases}
-\calL_\del u=f \quad & \text{on } \Om, \\
u= 0 \quad & \text{on }   \Om_{\mathcal{I}}\
\end{cases}
\eeq
for a function $u(\bm x):\R^d\to\R$ and $\Om_{\mathcal{I}}$ being the nonlocal interaction domain of nonzero $d$-dimensional volume.

To make connections of equation \eqref{eq:ND_steady} with their local
differential counterparts, we usually consider the kernel $\ga_\del$
to be suitably  localized as $\del\to0$. Without being too technical, this essentially means that we
want $\ga_\del(|\bm x|)|\bm x|^2$ to be approximating the Dirac delta measure at the origin
as $\del\to0$. Often, a convenient assumption for us to make is that
$\ga_\del$ is a rescaled kernel,
\begin{equation*}
\left \{
\begin{aligned}
& \ga_\del(|\bm x|)=\frac{1}{\del^{d+2}} \gamma\left(\frac{|\bm x|}{\del}\right), \quad \gamma \text{ is nonnegative and nonincreasing on (0,1)},\\
& \text{with } \text{supp}(\ga)\subset [0,1]  \text{ and }  \int_{\R^d} |\bm x|^2 \ga(|\bm x|) d\bm x =d\,.
\end{aligned}
\right.
\text{(K)}
\end{equation*}

In this paper, we propose an energy-based coupling method that combines the nonlocal diffusion equation defined as above with the local classical diffusion equation. Since the construction of our coupling follows the spirit of the quasinonlocal atomistic-to-continuum coupling methods for crystalline materials (see for example, \cite{Shimokawa:2004,E:2006,MingYang,Shapeev2012a,luskin2011,OrtnerZhang}), we call our method the {\itshape quasinonlocal (QNL) coupling} of nonlocal and local diffusion.  We focus on one-dimensional problems in this work to better illustrate the idea. The multi-dimensional generalizations are possible and will be carried out in separate works.

More specifically, in section \ref{sec_coupling} we first define the combined total energy from which the quasinonlocal operator is derived through energy variation, followed by the discussion of the concerned issue of patch-test consistency. Section \ref{sec_stab_wellposedness} contains rigorous arguments of the well-posedness of the coupled problem.
Section \ref{sec_model_error} further explores the modeling accuracy of the coupled method compared with the fully local diffusion equation in terms of small $\del$, in which the uniform first order accuracy in terms of $\del$ is shown. Section \ref{sec_num}
contains numerical experiments and then conclusion and discussions are put in section \ref{sec_conclusion}.


\section{Consistent coupling of nonlocal and local diffusions}\label{sec_coupling}
In this section, we formulate our idea of the QNL coupling in a
one-dimensional bar.  Without loss of generality, we work on the
domain $\Om=(-1,1)$ throughout the paper.  We consider the nonlocal
interaction region to be on the left side of the bar $\Om$ and the
local interaction region to be on the right side with a transition
layer in the middle of width $\delta$.  Now that the domain $\Om$ is
composed of both nonlocal and local interaction regions, the Dirichlet
boundary condition to impose should be considered as a mixture of
nonlocal and local boundary conditions. Specifically, to the left of
the bar $\Om$ there is a nonlocal boundary $(-1-\delta, -1)$ and to
the right of the bar a local boundary $\{ 1\}$. In all further
discussions we use $\Om_\del=(-1-\delta, -1)\cup\{1\}$ as the boundary
domain which is mixed with nonlocal and local boundary. See Figure
\ref{fig:1dbar} for the graphical illustration of the coupled nonlocal
and local domain.

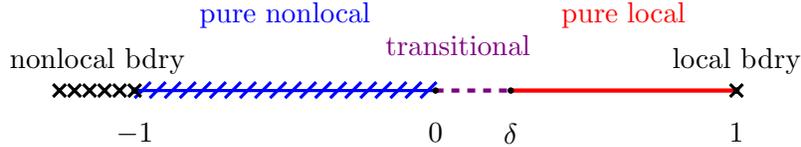
\begin{figure}[htbp]
 \begin{tikzpicture}
 \draw[very thick,blue] (-4,0)--(0,0);
 \draw [decorate, decoration={border,amplitude=0.3cm,segment length=2mm}, blue, very thick] (-4,-0.1)--(0,-0.1);
 \draw[ultra thick, red] (1,0 )--(4,0);
 \draw[ultra thick, dashed, violet] (0,0)--(1,0);
  \draw [decorate, decoration={crosses,shape height=0.17cm, shape width=0.17cm, segment length=2mm}, very thick] (-5,0)--(-4,0);
   \draw [decorate, decoration={crosses,shape height=0.17cm, shape width=0.17cm, segment length=2mm}, very thick] (4,0);

 \node[blue] at (-2, 1) {pure nonlocal};
 \node[red] at (2.5, 1) {pure local};
 \node[violet] at (0.3,0.6) {transitional};
  \node at (-4.5, 0.4) {nonlocal bdry};
  \node at (4, 0.4) {local bdry};

 \draw (-4,0) circle(1pt) [fill] node[below=0.3cm] {$-1$};
 \draw (0,0) circle(1pt) [fill] node[below=0.3cm] {$0$};
 \draw (1,0) circle(1pt) [fill] node[below=0.3cm] {$\delta$};
 \draw (4,0) circle(1pt) [fill] node[below=0.3cm] {$1$};

\end{tikzpicture}
\caption{Graphical illustration of the 1D domain}  \label{fig:1dbar}
\end{figure}

\subsection{The energy space}
The QNL coupling method comes from energy variation of the total energy defined as
\begin{equation}\label{qnl_energy_v1}
E^{\rm{qnl}}_\del(u)
:=\frac{1}{2}\iint_{x\leq0 \text{ or }y\leq0} \gamma_{\delta}(\abs{y-x})\left(u(y)-u(x)\right)^2\,dy dx
+ \frac{1}{2} \int_{x>0}  | u' (x)|^2 \om_\del(x)\,dx.
\end{equation}
where the weight function $\om_\del$ is given by
\begin{equation}\label{local_energy_weight}
\om_\del(x):=\int_{0}^{1} dt \int_{|s|<\frac{x}{t}} |s|^2\gamma_{\delta}(|s|) \, ds.
\end{equation}
From the definition of the kernel $\ga_\del$ in $(K)$, in particular that the second moment of $\ga_\del$ is equal to $d=1$,
it is easy to
see that $\om_\del(x)$ is a nondecreasing function on $[0,\infty)$
with $\om_\del(0)=0$ and $\om_\del(x)=1$ for $x\geq\delta$. Thus the
total quasinonlocal energy has a transition from pure nonlocal to pure
local through the transitional region $(0,\delta)$. We further
characterize of the weight function $\om_\del(x)$ in the following lemma.

\begin{lemma} \label{lem:weight}
By the definition of $\om_\del$ in \eqref{local_energy_weight}, we have the following equations
\begin{align}
\om_\del(x)&= 2 \int_0^x s^2\gamma_\delta(|s|)ds  +2x \int_x^\infty s\gamma_\delta(|s|)ds,  \label{eq:weight}\\
\om_\del^\prime(x) &=2\int_x^{\infty}  s\gamma_{\delta}(s)ds. \label{eq:weight_derivative}
\end{align}
\end{lemma}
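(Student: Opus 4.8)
The plan is to derive \eqref{eq:weight} from the definition \eqref{local_energy_weight} by exchanging the order of the $t$- and $s$-integrations via Tonelli's theorem, and then to obtain \eqref{eq:weight_derivative} simply by differentiating \eqref{eq:weight}.

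First I would use that we are in one dimension and that $\gamma_\del$ is even, so that for each fixed $t\in(0,1)$,
\[
\int_{|s|<x/t}|s|^2\gamma_\del(|s|)\,ds \;=\; 2\int_0^{x/t} s^2\gamma_\del(s)\,ds ,
\]
and hence $\om_\del(x) = 2\int_0^1\!\int_0^{x/t} s^2\gamma_\del(s)\,ds\,dt$. The relevant region in the $(t,s)$-plane is $\{0<t<1,\ 0<s<x/t\}$, which I would rewrite as $\{\,s>0,\ 0<t<\min(1,x/s)\,\}$. Since the integrand is nonnegative, Tonelli's theorem licenses the swap, and performing the (now inner) $t$-integral gives $\int_0^1 dt = 1$ when $0<s<x$ and $\int_0^{x/s}dt = x/s$ when $s\ge x$. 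This yields
\[
\om_\del(x) \;=\; 2\int_0^x s^2\gamma_\del(s)\,ds \;+\; 2\int_x^\infty s^2\gamma_\del(s)\,\frac{x}{s}\,ds \;=\; 2\int_0^x s^2\gamma_\del(s)\,ds + 2x\int_x^\infty s\gamma_\del(s)\,ds,
\]
which is \eqref{eq:weight}. All the integrals are finite because, by assumption (K), $s^2\gamma_\del(s)$ is integrable at the origin and $\gamma_\del$ is compactly supported.

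For \eqref{eq:weight_derivative} I would note that $s\mapsto s^2\gamma_\del(s)$ and $s\mapsto s\gamma_\del(s)$ are locally integrable, so both terms on the right-hand side of \eqref{eq:weight} are absolutely continuous functions of $x$ and can be differentiated using the fundamental theorem of calculus together with the product rule. The first term produces $2x^2\gamma_\del(x)$, while the second produces $2\int_x^\infty s\gamma_\del(s)\,ds - 2x^2\gamma_\del(x)$; the two occurrences of $2x^2\gamma_\del(x)$ cancel, leaving $\om_\del'(x) = 2\int_x^\infty s\gamma_\del(s)\,ds$.

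The computation presents no real obstacle: the only points requiring care are the justification of the Tonelli exchange and of differentiating the tail integral, both of which follow from the integrability built into (K), together with the observation that \eqref{eq:weight_derivative} holds at every $x$ where $\gamma_\del$ is continuous (hence almost everywhere), which is all that is used later. The main subtlety is the bookkeeping of correctly splitting into the cases $s<x$ and $s\ge x$ after reversing the order of integration.
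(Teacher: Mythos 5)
Your proposal is correct and follows essentially the same route as the paper: reduce to $s>0$ by evenness of $\gamma_\del$, interchange the $t$- and $s$-integrations, split at $s=x$ to evaluate the inner $t$-integral, and then differentiate the resulting expression (with the $2x^2\gamma_\del(x)$ terms cancelling). Your added remarks on Tonelli and absolute continuity are sound justifications that the paper leaves implicit.
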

\begin{proof}
For the first equation,
\begin{equation*}
\begin{split}
\om_\del(x)&= \int_0^1 dt \int_{|s|<\frac{x}{t}} s^2 \ga_\del(|s|) ds = 2 \int_0^1 dt \int_0^{\frac{x}{t}} s^2 \ga_\del(|s|) ds \\
&=  2\int_0^x s^2 \ga_\del(|s|) \int_0^{1} dt  ds + 2 \int_x^\infty s^2 \ga_\del(|s|) \int_0^{\frac{x}{s}}dt  ds\\
&=2 \int_0^x s^2\gamma_\delta(|s|)ds  +2x \int_x^\infty s\gamma_\delta(|s|)ds\,.
\end{split}
\end{equation*}
Then $\om_\del^\prime(x) $ is obtained by taking derivatives of the expression.
\end{proof}

\begin{remark}
  For given kernel $\gamma$, we could calculate $\om_\del$ using the
  formula \eqref{eq:weight} given in the Lemma \ref{lem:weight}.  We give two
  examples in the following and the plot of the corresponding weight
  function is shown in Figure \ref{fig:weight}. These kernels will be
  used in our numerical example too.

(1) $\gamma_\del(x)=\frac{3}{2\del^3} \chi_{(-\del, \del)}(x)$, then
\begin{equation*}
\om_\del(x)=
\left\{
\begin{aligned}
&\frac{3x}{2\del}-\frac{x^3}{2\del^3}  &&  x\in(0,\delta),\\
& 1  &&  x\geq\delta.
\end{aligned}
\right.
\end{equation*}

(2) $\gamma_\del(x)=\frac{1}{|x|\del^2} \chi_{(-\del, \del)}(x)$, then
\begin{equation*}
\om_\del(x)=
\left\{
\begin{aligned}
&\frac{2x}{\del}-\frac{x^2}{\del^2}  &&  x\in(0,\delta).\\
& 1  &&  x\geq\delta.
\end{aligned}
\right.
\end{equation*}
 \begin{figure}[htbp]
  \centering
 \begin{tikzpicture}[scale=1.5]
      \draw[->, thick] (0,0) -- (3,0) node[below] {$x$};
      \draw[->, thick] (0,0) -- (0,3) node[left] {$\om_\del(x)$};
      \draw[domain=0:1,smooth,variable=\x,blue, very thick] plot ({\x},{3*\x-\x*\x*\x});
       \draw[domain=0:1,smooth,variable=\x,red, very thick, dashed] plot ({\x},{4*\x-2*\x*\x});
      \draw[blue, very thick] (1,2)--(3,2);
      \draw[red, very thick, dashed] (1,2)--(3,2);

      \draw[dashed] (1,2)--(1,0) node[below] {$\delta$};
      \draw[dashed] (1,2)--(0,2) node[left] {$1$};
    \end{tikzpicture}
\caption{Blue line: weight function for $\gamma_\del(x)=\frac{3}{2\del^3} \chi_{(-\del, \del)}(x)$. Red dashed line: weight function for  $\gamma_\del(x)=\frac{1}{|x|\del^2} \chi_{(-\del, \del)}(x)$.} \label{fig:weight}
    \end{figure}
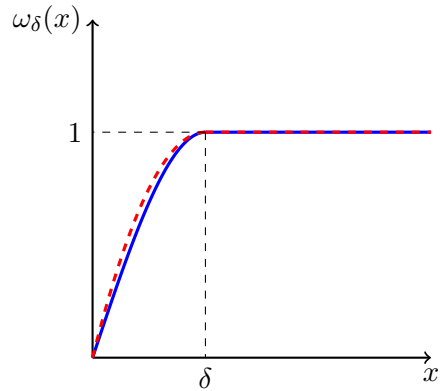
 \end{remark}

The energy defined in \eqref{qnl_energy_v1} has a more intuitive interpretation from
the geometric reconstruction
formulation \cite{E:2006,luskin2011,LiLu2016}. We will show in Proposition~\ref{prop_energy_v2} that \eqref{qnl_energy_v1} is equivalent to the following
\begin{align}\label{qnl_energy_v2}
E^{\rm{qnl}}_\del(u)
=&\frac{1}{2}\iint_{x\leq0 \text{ or }y\leq0} \gamma_{\delta}(\abs{y-x})\left(u(y)-u(x)\right)^2\, dydx \\
&\; + \frac{1}{2}\iint_{x>0\text{ and }y>0}dydx\, \gamma_{\delta}(\abs{y-x} ) \int_{0}^1 dt {\abs{u'(x+t(y-x))}}^2 |y-x|^2.\nonumber
\end{align}

{{To better convey the idea of geometric reconstruction proposed
    in \cite{LiLu2016}, we first assume that
    $\Omega=\Omega_1\sqcup\Omega_2$ is dominated by two different
    nonlocal kernels $\gamma_{\delta_1}$ and $\gamma_{\delta_2}$
    ($\delta_2 < \delta_1$), respectively.  Next, we utilizes the
    interaction kernel $\gamma_{\delta_1}$ throughout the entire
    domain $\Omega$, while in the subregion $\Omega_2$, the
    displacement of bond $(u(y) - u(x))$ will be \emph{reconstructed} so that
    it only involves $x$ and $y$ pairs that are closer in
    distance. More concretely, to link the interaction with kernel
    $\gamma_{\delta_2}$ to $\gamma_{\delta_1}$ where
    $\delta_1 = M \delta_2$, if a bond $\{x-y\}$ is completely
    contained in the subregion $\Omega_2$, then the displacement of
    this bond $(u(y) - u(x))$ will be reconstructed by the following
    expression:
\begin{equation*}
  u(y) - u(x) \rightarrow \left(u \bigl(x + \frac{j+1}{M} (y - x)\bigr) -
    u \bigl(x + \frac{j}{M} (y - x)\bigr) \right) M, \text{ for } j=0,\dots, (M-1).
\end{equation*}
Hence, the bond interaction
 $\gamma_{\delta_2}(\abs{y-x})\left( u(y)-u(x)\right)^2$ in $\Omega_2$ is approximated by
\begin{equation}\label{reconstruct_v1}
 \gamma_{\delta_1}(\abs{y-x}) \frac{1}{M} \sum_{j=0}^{M-1} \left( \left(u\bigl(x+\frac{j+1}{M}(y-x)\bigr)-u\bigl(x+\frac{j}{M}(y-x)\bigr)\right)\frac{\delta_1}{\delta_2}\right)^2.
\end{equation}
Note that if $\abs{x - y} \leq \delta_1$, the difference on the right
is evaluated at points with distance at most
$\frac{\delta_1}{M} = \delta_2$; thus effectively, the difference
$u(y) - u(x)$ is reconstructed by a more local interaction (and hence
the idea was referred to as the ``geometric reconstruction'' scheme in
\cite{E:2006}).  In fact, if such reconstruction is adopted everywhere
in the entire domain $\Omega$, one will recover the fully nonlocal
interactions with kernel $\gamma_{\delta_2}$ only \cite{LiLu2016}.
Notice that when $M=\frac{\delta_1}{\delta_2}\rightarrow \infty$, the
summation in \eqref{reconstruct_v1} can be viewed as a Riemann sum
that converges to an integral, that is
\begin{equation*}
\begin{split}
\frac{1}{M} \sum_{j=0}^{M-1}& \left(\Big(u\bigl(x+\frac{j+1}{M}(y-x)\bigr)-u\bigl(x+\frac{j}{M}(y-x)\bigr)\Big)\frac{\delta_1}{\delta_2}\right)^2\\
=& \sum_{j=0}^{M-1}\left(\frac{u\bigl(x+\frac{j+1}{M}(y-x)\bigr)-u\bigl(x+\frac{j}{M}(y-x)\bigr)}{\frac{1}{M}(y-x)} (y-x)\right)^2 \frac{1}{M}\\
&\qquad \rightarrow \int_{0}^1 {\abs{u'(x+t(y-x))}}^2 |y-x|^2 dt\quad \text{as}\quad M\rightarrow \infty.
\end{split}
\end{equation*}
The nonlocal bond interaction
$\gamma_{\delta}(\abs{y-x})\left( u(y)-u(x)\right)^2$ can be
reconstructed by its local continuum approximation:
\begin{equation}\label{cont_approx}
\gamma_{\delta}(\abs{y-x} )\cdot \int_{0}^1{\abs{u'(x+t(y-x))}}^2 |y-x|^2 dt.
\end{equation}
Based on this construction, we arrive at the total coupling energy
\eqref{qnl_energy_v2}.  }}

We will show now that the two ways of writing the quasinonlocal total energy are the same.
From the expressions \eqref{qnl_energy_v1} and \eqref{qnl_energy_v2}, it suffices to show that local contribution to the total energy is equivalent. The two different ways of writing the local contribution of the energy has their own advantages and we will adopt either definition at our convenience in the sequel.

\begin{proposition}\label{prop_energy_v2}
The following two  expressions of local contribution to the total energy are equivalent
\begin{align}
& \label{prop_energy_eq1}
E^{\rm{loc}}_\del(u)=
\frac{1}{2}\iint_{x>0\text{ and }y>0}dxdy\, \gamma_{\delta}(\abs{y-x} )\cdot \int_{0}^1 dt {\abs{u'(x+t(y-x))}}^2 |y-x|^2, \\
\intertext{and,}
& \label{eq:loc_energy}
E^{\rm{loc}}_\del(u)=\frac{1}{2} \int_{x>0} |u'(x)|^2 \om_\del(x)\,dx.
\end{align}
\end{proposition}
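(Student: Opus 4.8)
The plan is to start from \eqref{prop_energy_eq1} and transform the triple integral into the single integral \eqref{eq:loc_energy} by a chain of changes of variables, using Tonelli's theorem freely since every integrand is nonnegative. First I would substitute $s=y-x$ (with $x$ fixed, $y=x+s$), which turns \eqref{prop_energy_eq1} into
\[
E^{\rm loc}_\del(u)=\frac12\int_{-\infty}^{\infty}ds\,\gamma_{\delta}(\abs{s})\,s^2\int_0^1 dt\int_{\max(0,-s)}^{\infty}\abs{u'(x+ts)}^2\,dx,
\]
where the lower limit $x>\max(0,-s)$ encodes both constraints $x>0$ and $y=x+s>0$. Then, for each fixed $s$ and $t$, I would change variables $z=x+ts$; the endpoint $x=\max(0,-s)$ becomes $z=\max(ts,(t-1)s)$, which equals $t\abs{s}$ when $s>0$ and $(1-t)\abs{s}$ when $s<0$.

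Next I would exploit the symmetry $(s,t)\mapsto(-s,1-t)$, which is measure preserving and leaves $\gamma_{\delta}(\abs{s})\,s^2$ invariant, to identify the $s<0$ contribution with the $s>0$ contribution. This collapses the expression to
\[
E^{\rm loc}_\del(u)=\int_0^{\infty}ds\,\gamma_{\delta}(s)\,s^2\int_0^1 dt\int_{ts}^{\infty}\abs{u'(z)}^2\,dz.
\]
A further application of Tonelli, now integrating in $z$ last, is straightforward: for fixed $z>0$ and $s>0$ the condition $ts<z$ with $t\in(0,1)$ restricts $t$ to $(0,\min(1,z/s))$, so the $t$-integration contributes exactly the factor $\min(1,z/s)$, giving $E^{\rm loc}_\del(u)=\int_0^\infty \abs{u'(z)}^2\bigl(\int_0^\infty \gamma_{\delta}(s)s^2\min(1,z/s)\,ds\bigr)dz$.

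Finally, splitting the $s$-integral at $s=z$ yields
\[
\int_0^{\infty}\gamma_{\delta}(s)s^2\min(1,z/s)\,ds=\int_0^z \gamma_{\delta}(s)s^2\,ds+z\int_z^{\infty}\gamma_{\delta}(s)s\,ds=\tfrac12\,\om_\del(z)
\]
by \eqref{eq:weight} of Lemma~\ref{lem:weight}, which gives precisely \eqref{eq:loc_energy}. (Equivalently, one could bypass Lemma~\ref{lem:weight} and recognize $2\int_0^\infty \gamma_{\delta}(s)s^2\min(1,z/s)\,ds$ as $\om_\del(z)$ directly from the definition \eqref{local_energy_weight}, since that is exactly the computation carried out in the proof of Lemma~\ref{lem:weight} run in reverse.) The only delicate point in the whole argument is the bookkeeping of the domain of integration: one must correctly propagate the two constraints $x>0$ and $y>0$ through the substitution $z=x+ts$, splitting into the two sign cases for $s$ so that the symmetrization step is legitimate; everything else is routine Fubini–Tonelli manipulation.
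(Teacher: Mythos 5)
Your proof is correct and follows essentially the same route as the paper's: a chain of changes of variables combined with Tonelli so that the argument of $u'$ becomes the outer integration variable, followed by a symmetry in $t$ to identify the remaining $(s,t)$-integral with $\om_\del$. The only cosmetic differences are the order of the substitutions (you set $s=y-x$ first, avoiding the $1/t^3$ Jacobian the paper carries along) and that you land on formula \eqref{eq:weight} of Lemma~\ref{lem:weight} rather than the definition \eqref{local_energy_weight}.
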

\begin{proof}
We start with recasting the right hand side of \eqref{prop_energy_eq1}
\begin{align}\label{prop_energy_eq2}
 \frac{1}{2}&\iint_{x>0\text{ and }y>0} \gamma_{\delta}(\abs{y-x} )\cdot \int_{0}^1 dt {\abs{u'(x+t(y-x))}}^2 |y-x|^2\nonumber\\
&= \frac{1}{2} \int_{0}^{1} dt \int_{x>0}dx\int_{z> (1-t)x} dz \gamma_{\delta}\left( \abs{\frac{z-x}{t} }\right) |u'(z)|^2\frac{1}{t^3}|z-x|^2\nonumber\\
 &=\frac{1}{2}\int_{0}^{1} dt \int_{z>0}dz |u'(z)|^2 \int_{0<x<\frac{z}{1-t}} \gamma_{\delta}\left( \abs{\frac{x-z}{t} }\right)  \frac{1}{t^3}|x-z|^2~dx\nonumber \\
&=\frac{1}{2} \int_{z>0}dz |u'(z)|^2\int_{0}^{1} dt\int_{-\frac{z}{t}<s<\frac{z}{1-t}} \gamma_{\delta}\left( |s|\right)  |s|^2~ds \nonumber\,.
\end{align}
Now since
\begin{equation*}
\begin{split}
&\int_{0}^{1} dt\int_{-\frac{z}{t}<s<\frac{z}{1-t}} |s|^2\gamma_{\delta}\left( |s|\right)~ds\\
=& \int_{0}^{1} dt\int_{-\frac{z}{t}<s<0} |s|^2\gamma_{\delta}\left( |s|\right)~ds+\int_{0}^{1} dt\int_{0<s<\frac{z}{1-t}}|s|^2 \gamma_{\delta}\left( |s|\right)~ds \\
=&\int_{0}^{1} dt\int_{-\frac{z}{t}<s<0} |s|^2\gamma_{\delta}\left( |s|\right)~ds+\int_{0}^{1} dt\int_{0<s<\frac{z}{t}}|s|^2 \gamma_{\delta}\left( |s|\right)~ds\,,
\end{split}
\end{equation*}
we arrive at  definition of $E^{\rm{loc}}_\del$ in \eqref{eq:loc_energy} with the weight function $\om_\del$ as in \eqref{local_energy_weight}.
\end{proof}

Naturally, we seek solutions in the energy space $\cS^{\rm{qnl}}_\del(\Om)$ equipped with norm
\begin{equation*}
\| u\|^2_{\cS^{\rm{qnl}}_\del(\Om)}= \| u\|^2_{L^2(\Om\cup\Om_\del)}+| u |^2_{\cS^{\rm{qnl}}_\del(\Om)}
\end{equation*}
where  $| u |^2_{\cS^{\rm{qnl}}_\del(\Om)}:= 2E^{\rm{qnl}}_\del(u)$.  Now define $\cS^{\rm{qnl}}_\del(\Om)$ to be the completion of
$C^\infty_c(\Om)$ under the norm $\| \cdot \|_{\cS^{\rm{qnl}}_\del(\Om)}$, namely,
\begin{equation*}
\cS^{\rm{qnl}}_\del(\Om)=\{ u\in L^2(\Om\cup\Om_\del) : \exists \{u_n\}\in C^\infty_c(\Om),  \| u_n-u\|_{\cS^{\rm{qnl}}_\del(\Om)}\to 0 \text{ as } n\to\infty \}\,.
\end{equation*}
 Then we know first that $\cS^{\rm{qnl}}_\del(\Om)$ is a Hilbert space with
 inner product $(\cdot, \cdot)_{\cS^{\rm{qnl}}_\del(\Om)}$ to be defined as
 \begin{equation*}
 (u, v)_{\cS^{\rm{qnl}}_\del(\Om)}=(u, v)_{L^2(\Om\cup\Om_\del)} + b_\del^{\rm{qnl}}(u, v)
 \end{equation*}
where $b_\del^{\rm{qnl}}(u, v)$ is defined as
\beq\label{qnl_bilinear}
\begin{split}
b^{\rm{qnl}}_\del (u, v)=&\iint_{x\leq0 \text{ or }y\leq0}\gamma_{\delta}(\abs{y-x})\left(u(y)-u(x)\right)\left(v(y)-v(x)\right)~dydx\\
&\qquad\qquad + \int_{x>0} u'(x) v'(x)\om_\del(x)\, dx.
\end{split}
\eeq
Moreover, Poincar\'e type inequality holds  on the space $\cS^{\rm{qnl}}_\del(\Om)$ that is crucial in showing the well-posedness of the variational problem.

\begin{proposition}[Poincar\'e inequality] \label{prop:poincare}
For $u\in \cS^{\rm{qnl}}_\del(\Om)$, we have the following Poincar\'e type inequality,
\beq \label{eq:poincare}
\| u\|_{L^2(\Om)}\leq C | u |_{\cS^{\rm{qnl}}_\del(\Om)}\,,
\eeq
where $C$ is independent of $u$.
\end{proposition}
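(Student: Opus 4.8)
The plan is to reduce first, by density, to $u\in C^\infty_c(\Om)$: since $\cS^{\rm{qnl}}_\del(\Om)$ is by definition the completion of $C^\infty_c(\Om)$, and both $u\mapsto\|u\|_{L^2(\Om)}$ and $u\mapsto|u|_{\cS^{\rm{qnl}}_\del(\Om)}$ are continuous for $\|\cdot\|_{\cS^{\rm{qnl}}_\del(\Om)}$, an inequality \eqref{eq:poincare} valid for all smooth compactly supported $u$ passes to the limit. For such $u$ we have $u(1)=0$ and $u\equiv0$ on $(-1-\delta,-1)$. Since $\gamma$ is nonincreasing with positive second moment it is positive near the origin, so there is $x_*\in(0,\delta)$, as small as needed, with $\gamma_\delta>0$ a.e.\ on $(0,4x_*)$; I would fix such an $x_*$ and split $\Om=(-1,x_*)\cup[x_*,1)$, bounding $\|u\|_{L^2}$ on each piece by, respectively, the nonlocal and the local part of $|u|^2_{\cS^{\rm{qnl}}_\del(\Om)}=\iint_{x\le0\text{ or }y\le0}\gamma_\delta(|y-x|)(u(y)-u(x))^2\,dy\,dx+\int_{x>0}|u'(x)|^2\om_\del(x)\,dx$.

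On $[x_*,1)$: by Lemma~\ref{lem:weight} the weight $\om_\del$ is nondecreasing and positive on $(0,\infty)$, so $\om_\del\ge\om_\del(x_*)>0$ there and $\int_{x_*}^1|u'|^2\le\om_\del(x_*)^{-1}|u|^2_{\cS^{\rm{qnl}}_\del(\Om)}$; combined with $u(1)=0$, the one-dimensional Poincar\'e inequality gives $\|u\|_{L^2(x_*,1)}\le C_1|u|_{\cS^{\rm{qnl}}_\del(\Om)}$. On $(-1,x_*)$: the nonlocal part of $|u|^2_{\cS^{\rm{qnl}}_\del(\Om)}$ contains all $\gamma_\delta$-bonds lying inside $(-1-\delta,0]$ together with all cross-bonds joining $(0,x_*)$ to $\{x\le0\}$; by the choice of $x_*$, every $y\in(0,x_*)$ is joined to the subinterval $(-2x_*,-x_*)$ by bonds of length in $(x_*,3x_*)\subset(0,4x_*)$, on which $\gamma_\delta$ is bounded below by a positive constant, while $(-1-\delta,0]$ is itself connected by short $\gamma_\delta$-bonds. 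Hence every point of $(-1-\delta,x_*)$ is reachable from the collar $(-1-\delta,-1)$ — on which $u\equiv0$ — along a chain, of uniformly bounded length, of bonds \emph{each having an endpoint in $\{x\le0\}$} (these being the only nonlocal bonds the QNL energy contains). The standard chaining estimate — telescoping $u(y)$ along such a path, Cauchy--Schwarz over the bounded number of steps, and integration over the admissible intermediate points — then produces a nonlocal Poincar\'e bound $\|u\|_{L^2(-1,x_*)}\le C_2|u|_{\cS^{\rm{qnl}}_\del(\Om)}$, and adding the two estimates yields \eqref{eq:poincare} with a constant $C=C(\delta)$, which is all the statement requires.

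I expect the only genuinely delicate point to be the treatment of the transition region $(0,\delta)$, which is also why \eqref{eq:poincare} is not an immediate consequence of a fully nonlocal Poincar\'e inequality on $\Om\cup\Om_\del$: as $x\to0^+$ the local weight $\om_\del(x)\to0$ (and $\gamma_\delta$ itself may be singular at the origin), so the local part of the energy controls nothing about $u$ near $0^+$, whereas the nonlocal cross-bonds to $\{x\le0\}$ become weak as $x\to\delta^-$; the splitting point $x_*$ is chosen precisely to sit where both mechanisms are still effective. Within the nonlocal estimate, the bookkeeping — keeping the chain restricted to bonds that touch $\{x\le0\}$ and tracking the weights so that $C_2$ stays finite — is the part that needs care; the rest is routine. (For fixed $\delta$ one could alternatively argue by contradiction from compactness of the embedding $\cS^{\rm{qnl}}_\del(\Om)\hookrightarrow L^2(\Om)$, but establishing that compactness is itself extra work, so the constructive route above seems preferable.)
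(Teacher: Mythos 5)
Your argument is essentially correct and would yield the inequality (with a constant depending on $\delta$, which is all the statement asks), but it takes a genuinely different --- and considerably longer --- route than the paper's. The paper's proof is a two-line reduction: by Lemma~\ref{lem:local_bilinear_stab} (Cauchy--Schwarz applied to $u(y)-u(x)=\int_0^1(y-x)\,u'(x+t(y-x))\,dt$ together with the identity of Proposition~\ref{prop_energy_v2}), the weighted local part $\int_{x>0}|u'|^2\om_\del$ dominates $\iint_{x>0,\,y>0}\gamma_\delta(|y-x|)(u(y)-u(x))^2\,dx\,dy$, so by Proposition~\ref{prop_stab} the full QNL energy dominates the \emph{fully nonlocal} energy on all of $\mathbb{R}$; one then simply invokes the known nonlocal Poincar\'e inequality of \cite{Du2012a,Mengesha2014a}. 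This comparison sidesteps exactly the two difficulties you flag --- the degeneracy of $\om_\del$ at $0^+$ and the weakening of the cross-bonds near $\delta^-$ --- because the vanishing of the weight at $0$ is precisely calibrated to the small measure of bonds wholly contained in $\{x,y>0\}$; and it buys, for free, the $\delta$-uniformity of the constant recorded in the paper's own remark ($C\to A+\ep$ as $\delta\to0$), which your chaining construction does not obviously deliver: the chain has length of order $1/\delta$, so unless you exploit that the bonds used in successive links are essentially disjoint in the energy, your $C_2$ may degenerate as $\delta\to0$. What your approach buys in exchange is self-containedness --- it does not lean on the nonlocal Poincar\'e inequality from the literature, and it makes transparent which bonds actually carry the coercivity near the interface. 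If you write it up, the points needing care are the ones you already identified, plus one small one: ``$\gamma_\delta>0$ a.e.\ on $(0,4x_*)$'' should be upgraded to a positive lower bound on $(0,4x_*]$, which the monotonicity of $\gamma$ supplies after shrinking $x_*$.
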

\begin{proof}
From Proposition \ref{prop_stab} which will be shown later in section \ref{sec_stab_wellposedness}, we know that the quasinonlocal energy $| u |_{\cS^{\rm{qnl}}_\del(\Om)}$
is bounded from below by a purely nonlocal energy defined on the entire domain $\Om$. Thus by the nonlocal Poincar\'e inequality established previously in early works, e.g., \cite{Du2012a, Mengesha2014a},
\eqref{eq:poincare} is true.
{Indeed, \cite{Mengesha2014a} shows that for a given small number $\ep$ there exists $\del_0(\ep)$ such that for all $\del<\del_0$ the lemma holds
with $C(\del_0)=A+\ep$, where $A$ is the classical local Poincar\'e constant for the domain $\Om$.}
\end{proof}


\subsection{The QNL operator} \label{subsec:qnl_op}
We will derive the QNL operator denoted as $\calL^{\rm{qnl}}_\del $
from energy variation.
We take the first variation of $E^{\rm{qnl}}_\del(u)$ in \eqref{qnl_energy_v2} with any test function $v\in C_c^\infty(\Om)$, and get
\begin{align}\label{first_var_qnl_eq1}
&\langle d E^{\rm{qnl}}_\del (u), v\rangle := \lim_{\ep\to0} \frac{E^{\rm{qnl}}_\del (u+\ep v)- E^{\rm{qnl}}_\del (u) }{\ep}  \\
&\,= \iint\limits_{x\leq0 \text{ or }y\leq0} \gamma_{\delta}(\abs{y-x})\left(u(y)-u(x)\right)\left(v(y)-v(x)\right) dydx+ \int\limits_{x>0}  \om_\del(x) u'(x) v'(x) dx\nonumber\\
&\,=- 2\iint\limits_{x\leq0 \text{ or }y\leq0} \gamma_{\delta}(\abs{y-x})\left(u(y)-u(x)\right) v(x) dy dx
- \int\limits_{x>0}  (\om_\del(x) u'(x))' v(x) dx,\nonumber
\end{align}
where the last equality comes integration by parts and the fact that $\om_\del(0)=0$.
The force formalism $\calL^{\rm{qnl}}_\del u(x)$ is negative to the first variation of total energy, and it splits into three cases:

\begin{itemize}
\item Case I (nonlocal region): for $x\leq0$,
\end{itemize}
\begin{align}\label{force_case1}
\calL^{\rm{qnl}}_\del u(x)
=& 2 \int_{y\in \mathbb{R}} \gamma_{\delta} (\abs{y-x})\left(u(y)-u(x)\right) dy.
\end{align}

\begin{itemize}
\item Case II (transitional region): for $0<x\leq\delta$,
\end{itemize}
\begin{align}\label{force_case2}
\calL^{\rm{qnl}}_\del u(x)
=&2\int_{y<0}
 \gamma_{\delta}(\abs{y-x})\left(u(y)-u(x)\right)dy+ (\om_\del(x) u'(x))'\,.
  \end{align}

\begin{itemize}
\item Case III (local region): for $x>\delta$, and since $\om_\del(x) = 1$ for $x \geq \delta$,
\end{itemize}
\begin{align}\label{force_case3}
\calL^{\rm{qnl}}_\del u(x)
=& (\om_\del(x) u'(x))' = u''(x)\,.
\end{align}

\begin{remark}\label{rem:BalanceLM}
  { Since the QNL operator $\calL^{\rm{qnl}}_\del $ is defined
    through the first variation of total energy,
    $\calL^{\rm{qnl}}_\del $ is self-adjoint, that is, from a physical
    point of view, the force acting on $x$ from $y$ is equal to the
    force acting on $y$ from $x$. This symmetry in acting forces
    guarantees the balance of linear momentum. In addition, this QNL
    framework ensures the flux balance, and satisfies energy
    conservation.  }
\end{remark}


\subsection{Consistency at the interface}
\label{subsec:ghost_force}

We will show in this part that the QNL coupling is consistent at the interface (in the language of atomistic-to-continuum coupling, it is free of ghost force), namely, for a linear displacement $u^{\rm{lin}}(x)=Fx+a$, the force equals zero. For this matter, we only need to worry about the values of $\calL^{\rm{qnl}}_\del u^{\rm{lin}}$ in the interfacial region, since it is obviously zero in the pure nonlocal and local regions as given by case I and case III in \eqref{force_case1} and \eqref{force_case3}.  For a more general consideration that will also be useful in the next sections, we give the following lemma that involves the operator $\calL^{\rm{qnl}}_\del $ acting on smooth functions in the interfacial region. The lemma states that if $\delta$ is small, the QNL diffusion is approximately a local diffusion with effective diffusion constant $a(x)$.

\begin{lemma} \label{lem:interfacial}
 For any smooth function $v$,
\begin{equation}\label{eq:interfacial}
\calL^{\rm{qnl}}_\del v(x)=a(x)v''(x) + O(\del \| v'''\|_{C^0} ),\quad 0< x < \delta\,,
\end{equation}
where $a$ is given by
\begin{equation} \label{equiv_diff}
a(x)=1- \int_x^\delta s^2 \ga_\del(|s|) ds +2x \int_x^\del s\gamma_\delta(|s|)ds\,.
\end{equation}
\end{lemma}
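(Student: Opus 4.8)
The plan is to start from the explicit form of the QNL operator in the transitional region, namely Case II, equation \eqref{force_case2}:
\[
\calL^{\rm{qnl}}_\del v(x) = 2\int_{y<0}\gamma_{\delta}(\abs{y-x})\bigl(v(y)-v(x)\bigr)\,dy + \bigl(\om_\del(x) v'(x)\bigr)', \qquad 0<x<\delta,
\]
and to treat the nonlocal integral and the local term separately, then add them. Since $0<x<\delta$ and $\gamma_\del$ is supported in $[-\delta,\delta]$, the nonlocal integral is effectively taken over $y\in[x-\delta,0)$, so all the lengths $\abs{y-x}$ appearing are at most $\delta$; this is the fact that will produce the $O(\del)$ remainder.

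For the nonlocal term I would Taylor expand $v(y)=v(x)+v'(x)(y-x)+\tfrac12 v''(x)(y-x)^2+r(y)$ with $\abs{r(y)}\le \tfrac16\|v'''\|_{C^0}\abs{y-x}^3$, change variables $s=y-x$, and use that $\gamma_\del(\abs{s})$ is even with second moment $\int_\RR \abs{s}^2\gamma_\del(\abs{s})\,ds=1$ (from $(K)$ with $d=1$), whence $\int_\RR \abs{s}^3\gamma_\del(\abs{s})\,ds\le\delta$. This yields
\[
2\int_{y<0}\gamma_\del(\abs{y-x})\bigl(v(y)-v(x)\bigr)\,dy = -2v'(x)\!\int_x^\delta\! s\gamma_\del(s)\,ds + v''(x)\!\int_x^\delta\! s^2\gamma_\del(s)\,ds + O\!\bigl(\del\|v'''\|_{C^0}\bigr).
\]
For the local term I would use Lemma~\ref{lem:weight}: $(\om_\del v')'=\om_\del' v'+\om_\del v'' = 2v'(x)\int_x^\delta s\gamma_\del(s)\,ds+\om_\del(x)v''(x)$ by \eqref{eq:weight_derivative} and the compact support. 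Adding the two contributions, the $v'(x)$ terms cancel \emph{exactly} — this is precisely the absence of ghost force anticipated in \S\ref{subsec:ghost_force} — leaving the coefficient of $v''(x)$ equal to $\int_x^\delta s^2\gamma_\del(s)\,ds+\om_\del(x)$.

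It then remains to identify this coefficient with $a(x)$. Substituting the expression \eqref{eq:weight} for $\om_\del(x)$ and using the normalization $\int_0^\delta s^2\gamma_\del(s)\,ds=\tfrac12$ (again from $(K)$, by evenness), one gets $\int_x^\delta s^2\gamma_\del+2\int_0^x s^2\gamma_\del=\int_0^\delta s^2\gamma_\del+\int_0^x s^2\gamma_\del=\tfrac12+\int_0^x s^2\gamma_\del=1-\int_x^\delta s^2\gamma_\del$, so the coefficient becomes $1-\int_x^\delta s^2\gamma_\del(s)\,ds+2x\int_x^\delta s\gamma_\del(s)\,ds=a(x)$, which is \eqref{equiv_diff}, completing the proof. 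The computation is essentially bookkeeping built on the moment normalizations and Lemma~\ref{lem:weight}; the only point requiring care is the error estimate, where one must exploit that $\abs{y-x}\le\delta$ on the kernel's support to convert the third-moment remainder into a bound of order $\delta$ times the (unit) second moment, rather than an $O(1)$ term.
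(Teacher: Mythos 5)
Your proposal is correct and follows essentially the same route as the paper's proof: Taylor expansion of the nonlocal integral in $s=y-x$ over $[-\delta,-x]$, cancellation of the first-order term against $\om_\del'(x)v'(x)$ via Lemma~\ref{lem:weight}, and identification of the resulting $v''$ coefficient with $a(x)$ using the second-moment normalization. Your write-up actually spells out the final bookkeeping step (matching $\int_x^\delta s^2\gamma_\del+\om_\del(x)$ with the stated form of $a(x)$) more explicitly than the paper does.
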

\begin{proof}
For $x\in(0,\delta)$, by the expressions of $\om_\del$ and $\om_\del^\prime$ in Lemma~\ref{lem:weight}, we have
\begin{equation*}
\begin{split}
\calL^{\rm{qnl}}_\del v(x)&= 2\int_{y<0}
 \gamma_{\delta}(\abs{y-x})\left(v(y)-v(x)\right)dy+ (\om_\del(x) v'(x))' \\
 &=2\int_{-\delta}^{-x} \ga_\del(s)\left( sv'(x)+\frac{1}{2} s^2 v''(x)+O(|s|^3 \| v'''\|_{C^0}) \right)\\
  &\qquad \qquad +  \om_\del(x) v''(x) +\om_\del^\prime(x) v'(x) \\
 &=  \left(\int_x^\delta s^2 \ga_\del(|s|) ds \right) v''(x) +  \om_\del(x) v''(x)  + O(\delta \| v'''\|_{C^0})\\
 &= \left( 1- \int_x^\delta s^2 \ga_\del(|s|) ds +2x \int_x^\del s\gamma_\delta(|s|)ds \right) v''(x) +O(\delta\| v'''\|_{C^0})\,.
 \end{split}
\end{equation*}
Thus, we proved this lemma.
\end{proof}

\begin{remark}\label{rem:equiv_diff}
We can further quantify $a(x)$ as follows.
\begin{enumerate}
\item One can show that $\frac{1}{2}\leq a(x)\leq \frac{3}{2}$ for $x\in (0,\delta)$ and $a(\delta)=1$. Indeed,
\begin{equation*}
a(x)\geq 1- \int_x^\delta s^2 \ga_\del(|s|) ds  \geq 1- \int_0^\delta s^2 \ga_\del(|s|) ds =\frac{1}{2}\,,
\end{equation*}
and
\begin{equation*}
a(x)\leq 1- \int_x^\delta s^2 \ga_\del(|s|) ds +2 \int_x^\del s^2\gamma_\delta(|s|)ds  \leq 1+ \int_0^\delta s^2 \ga_\del(|s|) ds=\frac{3}{2}\,.
\end{equation*}
As last, $a(\delta)=1$ is obvious.
\item For the two examples that $\gamma_\del(x)=\frac{3}{2\del^3} \chi_{(-\del, \del)}(x)$ and $\gamma_\del(x)=\frac{1}{|x|\del^2} \chi_{(-\del, \del)}(x)$, we could calculate $a(x)$ explicitly through equation \eqref{equiv_diff}.
\begin{equation*}
a(x)=
\left\{
\begin{aligned}
& \frac{1}{2}+\frac{3x}{2\delta}-\frac{x^3}{\delta^3} \quad \text{for } \gamma_\del(x)=\frac{3}{2\del^3} \chi_{(-\del, \del)}(x)\\
&\frac{1}{2}+\frac{2x}{\delta}-\frac{3x^2}{2\delta^2} \quad \text{for } \gamma_\del(x)=\frac{1}{|x|\del^2} \chi_{(-\del, \del)}(x) \,.
\end{aligned}
\right.
\end{equation*}

We remark that although the effective local diffusion coefficient $a(x)$ is not equal to a constant one for $0< x<\delta$, we
have in the two cases
\begin{equation*}
\int_0^{\delta}a(x) dx=\delta\,.
\end{equation*}
In other words, the spacial averaged diffusion coefficient for $0< x< \delta$ is equal to one.
\end{enumerate}
\end{remark}

Lemma \ref{lem:interfacial} shows the expansion of $\calL^{\rm{qnl}}_\del  v(x)$ in the interfacial region using with the second and higher derivatives of $v$. Thus it
is obvious that for a linear function $u^{\rm{lin}}$, $\calL^{\rm{qnl}}_\del u^{\rm{lin}}=0$. In other words, the QNL coupling
passes the patch-test.
\begin{corollary}[Patch-test consistency]
For a linear function $u^{\rm{lin}}(x)=Fx+a$,
\begin{equation*}
\calL^{\rm{qnl}}_\del u^{\rm{lin}}=0\,.
\end{equation*}
\end{corollary}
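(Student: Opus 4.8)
The plan is to read off the result directly from Lemma~\ref{lem:interfacial} together with the explicit case analysis for $\calL^{\rm{qnl}}_\del$ in \eqref{force_case1}--\eqref{force_case3}. Let $u^{\rm{lin}}(x)=Fx+a$. I would split into the three regions exactly as in the definition of the operator. In the pure nonlocal region $x\le 0$, formula \eqref{force_case1} gives
\[
\calL^{\rm{qnl}}_\del u^{\rm{lin}}(x)=2\int_{y\in\R}\gamma_\del(\abs{y-x})\bigl(F(y-x)\bigr)\,dy,
\]
and this vanishes because $\gamma_\del$ is an even function (radial in one dimension), so the integrand is odd in $y-x$ and the integral over all of $\R$ is zero. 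In the pure local region $x>\delta$, formula \eqref{force_case3} gives $\calL^{\rm{qnl}}_\del u^{\rm{lin}}(x)=(u^{\rm{lin}})''(x)=0$ immediately.

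For the transitional region $0<x<\delta$ I would invoke Lemma~\ref{lem:interfacial} with $v=u^{\rm{lin}}$: since $u^{\rm{lin}}$ is smooth with $(u^{\rm{lin}})''\equiv 0$ and $(u^{\rm{lin}})'''\equiv 0$, the expansion \eqref{eq:interfacial} reads $\calL^{\rm{qnl}}_\del u^{\rm{lin}}(x)=a(x)\cdot 0+O(\del\cdot 0)=0$. Alternatively, and perhaps more cleanly since it avoids relying on the $O(\cdot)$ bookkeeping for an exact statement, I would substitute $u^{\rm{lin}}$ directly into \eqref{force_case2}: the term $(\om_\del(x)(u^{\rm{lin}})'(x))'=(\om_\del(x)F)'=F\om_\del'(x)$, while the nonlocal term is $2\int_{y<0}\gamma_\del(\abs{y-x})F(y-x)\,dy = -2F\int_x^\delta s\,\gamma_\del(s)\,ds$ after the substitution $s=x-y$ and using $\mathrm{supp}(\gamma_\del)\subset[-\delta,\delta]$. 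By \eqref{eq:weight_derivative} we have $\om_\del'(x)=2\int_x^\infty s\gamma_\del(s)\,ds=2\int_x^\delta s\gamma_\del(s)\,ds$, so the two contributions cancel exactly, giving $\calL^{\rm{qnl}}_\del u^{\rm{lin}}(x)=0$.

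Combining the three regions yields $\calL^{\rm{qnl}}_\del u^{\rm{lin}}\equiv 0$ on $\Om$. There is no real obstacle here — the corollary is essentially immediate from the preceding lemma. The only point requiring a small amount of care is whether one wants an \emph{exact} zero or merely the $O(\del)$ vanishing: the expansion in Lemma~\ref{lem:interfacial} already has an exact leading coefficient $a(x)$ multiplying $v''$, and since $v''\equiv0$ for a linear function the remainder also vanishes identically, so the conclusion is genuinely exact. I would therefore present the transitional-region computation via the direct cancellation in \eqref{force_case2} and \eqref{eq:weight_derivative} to make the exactness transparent, and dispatch the other two regions in one line each as above.
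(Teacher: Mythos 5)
Your proposal is correct and follows essentially the same route as the paper, which likewise dispatches the corollary by combining \eqref{force_case1}--\eqref{force_case3} with Lemma~\ref{lem:interfacial}. The extra direct cancellation you carry out in the transitional region using \eqref{eq:weight_derivative} is a valid (and slightly more explicit) way of confirming the exact vanishing that the lemma's expansion already guarantees, since both $v''$ and $v'''$ vanish identically for a linear function.
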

\begin{proof}
This immediately follows from  \eqref{force_case1}, \eqref{force_case2},  and \eqref{force_case3} using Lemma \ref{lem:interfacial}.
\end{proof}

\section{Stability and well-posedness}\label{sec_stab_wellposedness}
In this section, our goal is to show that the bilinear form $b^{\rm{qnl}}_\del (\cdot, \cdot): \cS^{\rm{qnl}}_\del(\Om) \times \cS^{\rm{qnl}}_\del(\Om) \to \mathbb{R}$
defined by \eqref{qnl_bilinear_stab} is bounded and
coercive, thus the well-posedness of the variational problem can be followed.
The boundedness of the bilinear norm is obvious since $\cS^{\rm{qnl}}_\del(\Om)$ is a Hilbert space and $b^{\rm{qnl}}_\del (\cdot, \cdot)$ is part of its inner product. The coercivity is from the Poincar\'e inequality \eqref{eq:poincare}, and the essential step is proved in
Proposition \ref{prop_stab}. Now let us define the local contribution of the bilinear form as
\begin{equation}\label{half_dom_local_bilinear}
b^{\rm{loc}}_\del (u,v):=  \int_{x>0} u'(x) v'(x)\om_\del(x)\, dx.
\end{equation}

We can see the lower bound of $b^{\rm{loc}}_\del (u,u)$ in the following lemma.

\begin{lemma}\label{lem:local_bilinear_stab}
For $b^{\rm{loc}}_\del (u,v)$ defined in \eqref{half_dom_local_bilinear}, we have
\begin{equation}\label{stab_local_bilinear_eq}
b^{\rm{loc}}_\del (u,u)
\ge \iint_{x>0\text{ and }y>0}\gamma_{\delta}(\abs{y-x} )\big( u(y)-u(x) \big)^2~dxdy .
\end{equation}
\end{lemma}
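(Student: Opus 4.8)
I want to show that the weighted Dirichlet energy $b^{\rm loc}_\del(u,u)=\int_{x>0}|u'(x)|^2\om_\del(x)\,dx$ dominates the purely nonlocal energy $\iint_{x>0,\,y>0}\gamma_\del(|y-x|)(u(y)-u(x))^2\,dxdy$ on the half-line. The natural starting point is Proposition~\ref{prop_energy_v2}, which already tells us that
\[
\tfrac12 b^{\rm loc}_\del(u,u)=\tfrac12\iint_{x>0,\,y>0}\gamma_\del(|y-x|)\int_0^1|u'(x+t(y-x))|^2|y-x|^2\,dt\,dxdy,
\]
so it suffices to bound the nonlocal bond energy $\iint_{x>0,\,y>0}\gamma_\del(|y-x|)(u(y)-u(x))^2$ by this reconstructed expression bondwise.

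\textbf{Key step: a bondwise Cauchy--Schwarz / Jensen estimate.} Fix $x,y>0$ and write $u(y)-u(x)=\int_0^1 u'(x+t(y-x))\,(y-x)\,dt$. By the Cauchy--Schwarz inequality (equivalently Jensen's inequality, since $dt$ is a probability measure on $[0,1]$),
\[
\bigl(u(y)-u(x)\bigr)^2=\Bigl(\int_0^1 u'(x+t(y-x))(y-x)\,dt\Bigr)^2\le \int_0^1 |u'(x+t(y-x))|^2\,|y-x|^2\,dt.
\]
Multiplying by $\gamma_\del(|y-x|)\ge 0$ and integrating over $\{x>0,\,y>0\}$ gives exactly
\[
\iint_{x>0,\,y>0}\gamma_\del(|y-x|)\bigl(u(y)-u(x)\bigr)^2\,dxdy\ \le\ \iint_{x>0,\,y>0}\gamma_\del(|y-x|)\int_0^1|u'(x+t(y-x))|^2|y-x|^2\,dt\,dxdy,
\]
and the right-hand side equals $b^{\rm loc}_\del(u,u)$ by Proposition~\ref{prop_energy_v2} (i.e. by the identity between \eqref{prop_energy_eq1} and \eqref{eq:loc_energy}). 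This is precisely \eqref{stab_local_bilinear_eq}.

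\textbf{Remarks on rigor and the main obstacle.} For $u\in C^\infty_c(\Om)$ (the dense class defining $\cS^{\rm qnl}_\del(\Om)$) everything above is elementary, and the inequality then extends to all of $\cS^{\rm qnl}_\del(\Om)$ by density, since both sides are continuous with respect to the $\cS^{\rm qnl}_\del$-norm. The only mild technical point—and the one I'd be most careful about—is the use of the integral representation $u(y)-u(x)=\int_0^1 u'(x+t(y-x))(y-x)\,dt$ and the Fubini interchange it entails in the double integral; for smooth compactly supported $u$ this is immediate, and no further regularity is needed. There is essentially no hard part here: the lemma is a one-line consequence of Jensen's inequality once Proposition~\ref{prop_energy_v2} is invoked; the substantive work was already done in establishing that proposition.
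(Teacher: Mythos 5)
Your proof is correct and follows essentially the same route as the paper: write $u(y)-u(x)=\int_0^1 u'(x+t(y-x))(y-x)\,dt$, apply Cauchy--Schwarz in $t$, and identify the resulting majorant with $b^{\rm loc}_\del(u,u)$ via Proposition~\ref{prop_energy_v2}. The density remark is a reasonable extra note but not needed beyond what the paper already does.
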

\begin{proof}

The right hand side of \eqref{stab_local_bilinear_eq} can be recast as
\begin{align}\label{stab_local_bilinear_eq3}
&\int_{x>0\text{ and }y>0}\gamma_{\delta}(\abs{y-x} )\big( u(y)-u(x) \big)^2~dxdy\nonumber\\
&\quad= \int_{x>0}dx\int_{y>0}dy \gamma_{\delta}(\abs{y-x} )\left[\int_{0<t<1} du\big(x+t(y-x)\big)\right]^2\nonumber\\
&\quad= \int_{x>0}dx\int_{y>0}dy \gamma_{\delta}(\abs{y-x} )\left[\int_{0}^1 (y-x)\cdot u'\big(x+t(y-x)\big) dt \right]^2\nonumber\\
&\quad \leq   \int_{x>0}dx\int_{y>0}dy \gamma_{\delta}(\abs{y-x} ) (y-x)^2 \int_{0}^1 |u'\big(x+t(y-x)\big)|^2 dt\,,
\end{align}
where the last expression is exactly  $2 E^{\rm{loc}}_\del(u)=b^{\rm{loc}}_\del (u,u)$ as shown in Proposition \ref{prop_energy_v2}.
\end{proof}
Lemma~\ref{lem:local_bilinear_stab} immediately leads to the stability property compared to the fully nonlocal bilinear operator.
\begin{proposition}\label{prop_stab}
For $b^{\rm{qnl}}_\del (u,v)$ defined in \eqref{half_dom_local_bilinear}, we have
  \begin{equation}\label{qnl_bilinear_stab}
    \qnlbilinear(u,u)
    \ge \iint_{x,y\in\mathbb{R}}\gamma_{\delta}(\abs{y-x})\left(u(y)-u(x)\right)^2~dydx.  \end{equation}
\end{proposition}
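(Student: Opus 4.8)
The plan is to decompose $\qnlbilinear(u,u)$ according to the two regions appearing in its definition \eqref{qnl_bilinear} and then apply Lemma~\ref{lem:local_bilinear_stab} to the local part. Since $\cS^{\rm{qnl}}_\del(\Om)$ is by construction the completion of $C^\infty_c(\Om)$ under $\|\cdot\|_{\cS^{\rm{qnl}}_\del(\Om)}$, and since both sides of \eqref{qnl_bilinear_stab} are continuous with respect to that norm (the left side is part of the inner product, and the right side is dominated by $b_\del^{\rm{qnl}}$, as the argument will show), it suffices to prove the inequality for smooth $u$; all integrals below are then classical.

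First I would write, using \eqref{qnl_bilinear} and the notation of \eqref{half_dom_local_bilinear},
\[
\qnlbilinear(u,u)=\iint_{x\leq0 \text{ or }y\leq0}\gamma_{\delta}(\abs{y-x})\left(u(y)-u(x)\right)^2\,dydx \;+\; b^{\rm{loc}}_\del(u,u).
\]
Applying Lemma~\ref{lem:local_bilinear_stab} to bound the second term from below gives
\[
b^{\rm{loc}}_\del(u,u)\ \ge\ \iint_{x>0\text{ and }y>0}\gamma_{\delta}(\abs{y-x})\left(u(y)-u(x)\right)^2\,dxdy .
\]
Then I would observe that the sets $\{x\le 0\text{ or }y\le 0\}$ and $\{x>0\text{ and }y>0\}$ partition $\mathbb{R}^2$ up to a null set, so adding the two nonlocal integrals recombines them into the single integral over all of $\mathbb{R}^2$, which is precisely the right-hand side of \eqref{qnl_bilinear_stab}.

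There is essentially no obstacle in this final assembly: the substantive work has already been done in Lemma~\ref{lem:local_bilinear_stab}, whose proof rewrites the fully nonlocal energy on the quadrant $\{x,y>0\}$ via the geometric-reconstruction identity of Proposition~\ref{prop_energy_v2} and then uses the Cauchy--Schwarz inequality to control the Riemann-sum-type average $\int_0^1 u'(x+t(y-x))\,dt$. The only point requiring a word of care is the reduction from general $u\in\cS^{\rm{qnl}}_\del(\Om)$ to smooth $u$, which is immediate from the density of $C^\infty_c(\Om)$ and the boundedness of the quadratic form $v\mapsto b_\del^{\rm{qnl}}(v,v)$ and of the fully nonlocal energy $v\mapsto\iint_{\mathbb{R}^2}\gamma_\del(\abs{y-x})(v(y)-v(x))^2\,dydx$ under $\|\cdot\|_{\cS^{\rm{qnl}}_\del(\Om)}$. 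Combined with the Poincar\'e inequality \eqref{eq:poincare}, this proposition then yields coercivity of $b_\del^{\rm{qnl}}$ on $\cS^{\rm{qnl}}_\del(\Om)$ and hence the well-posedness asserted at the start of the section.
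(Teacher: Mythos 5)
Your proposal is correct and follows essentially the same route as the paper: decompose $b^{\rm{qnl}}_\del(u,u)$ into the nonlocal part over $\{x\le 0 \text{ or } y\le 0\}$ plus $b^{\rm{loc}}_\del(u,u)$, invoke Lemma~\ref{lem:local_bilinear_stab} to bound the local part below by the nonlocal energy on the quadrant $\{x>0 \text{ and } y>0\}$, and recombine the two regions into the integral over all of $\mathbb{R}^2$. The density/continuity remark you add for general $u\in\cS^{\rm{qnl}}_\del(\Om)$ is a harmless extra precaution that the paper leaves implicit.
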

\begin{proof}
Recall the definition of $b^{\rm{qnl}}_\del (u, u)$ and use the conclusion of Lemma~\ref{lem:local_bilinear_stab}, we immediately get
\begin{align*}
b^{\rm{qnl}}_\del (u, u)=&\int_{x\leq0 \text{ or }y\leq0}\gamma_{\delta}(\abs{y-x})\left(u(y)-u(x)\right)^2~dxdy
+b^{\rm{loc}}_\del (u,u)\\
&\ge \int_{x\leq0 \text{ or }y\leq0}\gamma_{\delta}(\abs{y-x})\left(u(y)-u(x)\right)^2~dxdy\\
&\qquad+\int_{x>0\text{ and }y>0}\gamma_{\delta}(\abs{y-x} )\big( u(y)-u(x) \big)^2~dxdy
\end{align*}
\end{proof}

Now from the Poincar\'e inequality Proposition \ref{prop:poincare}, we conclude that $b^{\rm{qnl}}_\del (\cdot,\cdot)$ is bounded and coercive, thus leading to the well-posedness
of the QNL model.

\begin{theorem}\label{thm_wellposedness}
The QNL diffusion equation given by
\beq \label{eq:QNLD_steady}
\begin{cases}
-\calL^{\rm{qnl}}_\del  u^{\rm{qnl}}_\delta (x)=f(x), & \text{for } x\in \Om\\
u_\delta(x)=0, &\text{for }  x\in \Om_\del
\end{cases}
\eeq is well-posed, where $\calL^{\rm{qnl}}_\del $ is defined in
subsection~\ref{subsec:qnl_op}.
\end{theorem}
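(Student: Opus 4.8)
\textit{Proof proposal.} The plan is to recast \eqref{eq:QNLD_steady} in weak form and invoke the Lax--Milgram theorem (in fact the Riesz representation theorem, since the form is symmetric) on the Hilbert space $\cS^{\rm{qnl}}_\del(\Om)$. Testing the equation against $v\in C^\infty_c(\Om)$ and using the first variation identity \eqref{first_var_qnl_eq1} (integration by parts together with $\om_\del(0)=0$), the natural weak formulation is: find $u^{\rm{qnl}}_\delta\in\cS^{\rm{qnl}}_\del(\Om)$ such that
\[
b^{\rm{qnl}}_\del(u^{\rm{qnl}}_\delta, v) = \int_\Om f v\,dx \qquad\text{for all } v\in\cS^{\rm{qnl}}_\del(\Om).
\]

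First I would record boundedness of $b^{\rm{qnl}}_\del$: it is symmetric and positive semidefinite with $b^{\rm{qnl}}_\del(u,u)=|u|^2_{\cS^{\rm{qnl}}_\del(\Om)}$, so Cauchy--Schwarz gives $|b^{\rm{qnl}}_\del(u,v)|\le |u|_{\cS^{\rm{qnl}}_\del(\Om)}\,|v|_{\cS^{\rm{qnl}}_\del(\Om)}\le \|u\|_{\cS^{\rm{qnl}}_\del(\Om)}\|v\|_{\cS^{\rm{qnl}}_\del(\Om)}$. Next, coercivity: every $u\in\cS^{\rm{qnl}}_\del(\Om)$ is an $L^2$-limit of functions in $C^\infty_c(\Om)$, hence vanishes on $\Om_\del$, so $\|u\|_{L^2(\Om\cup\Om_\del)}=\|u\|_{L^2(\Om)}$. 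Combining this with the Poincar\'e inequality of Proposition~\ref{prop:poincare} yields
\[
\|u\|^2_{\cS^{\rm{qnl}}_\del(\Om)} = \|u\|^2_{L^2(\Om)} + |u|^2_{\cS^{\rm{qnl}}_\del(\Om)} \le (C^2+1)\,b^{\rm{qnl}}_\del(u,u),
\]
which is the coercivity estimate. For $f\in L^2(\Om)$ the functional $v\mapsto\int_\Om f v\,dx$ is bounded on $\cS^{\rm{qnl}}_\del(\Om)$, again by Cauchy--Schwarz and Proposition~\ref{prop:poincare}. Lax--Milgram then provides a unique weak solution $u^{\rm{qnl}}_\delta\in\cS^{\rm{qnl}}_\del(\Om)$, together with the stability bound $\|u^{\rm{qnl}}_\delta\|_{\cS^{\rm{qnl}}_\del(\Om)}\le C'\|f\|_{L^2(\Om)}$.

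Finally I would check that the weak solution solves the strong problem: reading the computation of subsection~\ref{subsec:qnl_op} backwards on test functions supported successively in $\{x<0\}$, in $(0,\delta)$, and in $\{x>\delta\}$ recovers \eqref{force_case1}, \eqref{force_case2}, \eqref{force_case3} respectively, so $-\calL^{\rm{qnl}}_\del u^{\rm{qnl}}_\delta=f$ in the appropriate (distributional) sense, while the constraint $u^{\rm{qnl}}_\delta=0$ on $\Om_\del$ is built into the space. The only points needing a little care are the density argument that makes the $L^2(\Om\cup\Om_\del)$ norm collapse to $\|\cdot\|_{L^2(\Om)}$ on this space, and the bookkeeping of the boundary contributions from integration by parts at $x=0$ and $x=\delta$ (handled by $\om_\del(0)=0$ and continuity of $\om_\del$). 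I do not expect a genuine obstacle: the substance of the theorem is coercivity, and that has already been reduced to Proposition~\ref{prop_stab} and the nonlocal Poincar\'e inequality.
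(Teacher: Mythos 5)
Your proposal is correct and follows essentially the same route as the paper: the paper's proof simply invokes Lax--Milgram, with boundedness noted to be immediate from $b^{\rm{qnl}}_\del$ being part of the inner product on the Hilbert space $\cS^{\rm{qnl}}_\del(\Om)$ and coercivity supplied by the Poincar\'e inequality of Proposition~\ref{prop:poincare} (which in turn rests on Proposition~\ref{prop_stab}). Your version merely spells out these same ingredients in more detail, plus the (harmless, and not required by the paper) verification that the weak solution satisfies the strong form.
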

\begin{proof}
The well-posedness follows immediately from Lax-Milgram theorem.
\end{proof}

\section{Convergence to the local diffusion as $\delta \to 0$}\label{sec_model_error}

We consider in this section the modeling error estimate of the QNL coupling equation \eqref{eq:QNLD_steady} as $\delta\to0$
to the local differential equation
\beq \label{eq:LD_steady}
\begin{cases}
- u_0^{\prime\prime} (x)=f(x), & x\in \Om\\
u_0(-1)=u_0(1)=0\,. &
\end{cases}
\eeq

{
In this section we assume that
$u_0$ has a smooth zero extension into  $(-1-\delta,-1)$ to avoid discussions on the effect of nonlocal boundary condition there.
We denote the error between the solutions to \eqref{eq:QNLD_steady} and \eqref{eq:LD_steady} to be
 $e_\delta(x)=u^{\rm{qnl}}_\delta -u_0(x)$.
With this extension and both local and nonlocal homogeneous Dirichlet conditions imposed on $u^{\rm{qnl}}_\delta$
on the interval $(-1-\delta, -1)$ and  the right end point $1$ of $\Om$ respectively,
we see that $e_\delta(x)=0$ for $x\in \Om_\del$.}

\paragraph{\textbf{Truncation error}}
Let the truncation error be $T_\delta(x)=\calL^{\rm{qnl}}_\del   u_0(x) - u_0^{\prime\prime}(x)$. Then $T_\delta(x)=T_\delta^1(x)+T_\delta^2(x)$,
where $T_\delta^1(x)=T_\delta(x)\chi_{(-1,0)} (x)$ and $T_\delta^2(x)=T_\delta(x) \chi_{(0,\delta)} (x)$. According to the calculations in section \ref{subsec:ghost_force}, we know that  $T_\delta^1(x)=O(\delta^2)$ for $x\in(-1,0)$ and $T_\delta^2(x)=O(1)$ for $x\in(0,\delta)$.  Notice that from Lemma \ref{lem:interfacial}, for $x\in(0,\delta)$,
\begin{equation*}
\begin{split}
T_\delta^2(x)=\calL^{\rm{qnl}}_\del   u_0(x) - u_0^{\prime\prime}(x) &=a(x) u_0^{\prime\prime}(x)- u_0^{\prime\prime}(x) +O(\delta) \\
&=(a(x)-1) u_0^{\prime\prime}(x)+O(\delta)\,.
\end{split}
\end{equation*}
Since $\frac{1}{2}\leq a(x)\leq \frac{3}{2} $ by Remark \ref{rem:equiv_diff}, we have
\begin{equation} \label{T2estimate}
|T_\delta^2(x)| \leq \frac{1}{2}C^\ast +O(\delta)\,,
\end{equation}
where $C^\ast=\|u_0\|_{C^2}$.
Now that $-\calL^{\rm{qnl}}_\del e_\delta(x)=-\calL^{\rm{qnl}}_\del   u^{\rm{qnl}}_\delta(x) +\calL^{\rm{qnl}}_\del u_0(x) = T_\delta(x)$,
we have $e_\delta(x) = (-\calL^{\rm{qnl}}_\del )^{-1}T_\delta^1(x)+ (-\calL^{\rm{qnl}}_\del )^{-1}T_\delta^2(x) =e_\delta^1(x)+e_\delta^2(x)$, where $e_\delta^1(x)$ and $e_\delta^2(x)$ are defined as
\beq \label{eq:errors}
\begin{cases}
e_\delta^1(x)=(-\calL^{\rm{qnl}}_\del )^{-1}T_\delta^1(x), &\\
e_\delta^2(x)=(-\calL^{\rm{qnl}}_\del )^{-1}T_\delta^2(x)\,.&
\end{cases}
\eeq
We are going to show next that $|e_\delta^1(x)| =O(\delta^2)$ and $|e_\delta^2(x)|=O(\delta)$. Thus the total error is of order $O(\delta)$. The main ingredients
are maximum principle and barrier functions.

In the following, we will show a maximum principle for solutions of \eqref{eq:QNLD_steady} that may have discontinuity at $0$.  We need such result
for error estimate because the truncation error $T_\del$ has been decomposed into two piecewise smooth functions such that  $e_\del^1$ and $e_\del^2$ might be discontinuous at $0$.

\begin{lemma}[Maximum principle]\label{lem:max_prip}
The operator $\calL^{\rm{qnl}}_\del $ satisfies the maximum principle, namely,
if $u\in C([-1-\del,0])\cap C^2([0,1])$, then $-\calL^{\rm{qnl}}_\del  u (x)\leq  0$ in $\Om$ implies that,
\begin{equation*}
\max_{x \in \Omega\cup\Om_\del } u(x) \leq \max_{x\in\Om_\del } u(x).
\end{equation*}

\end{lemma}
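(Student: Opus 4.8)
The plan is to argue by contradiction, using the structure of $\calL^{\rm{qnl}}_\del$ in the three regions (nonlocal, transitional, local) established in \eqref{force_case1}--\eqref{force_case3}. Suppose the conclusion fails, i.e.\ $M := \max_{x\in\Om\cup\Om_\del} u(x) > \max_{x\in\Om_\del} u(x)$, so that the maximum is attained at some interior point $x_0\in\Om = (-1,1)$ but nowhere on $\Om_\del = (-1-\del,-1)\cup\{1\}$. I would treat the location of $x_0$ case by case according to which of the three regimes it lies in.

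\emph{First}, if $x_0 \le 0$ (pure nonlocal region), then by \eqref{force_case1}, $0 \le -\calL^{\rm{qnl}}_\del u(x_0) = -2\int_{\R} \gamma_\del(\abs{y-x_0})(u(y)-u(x_0))\,dy$. Since $u(y) - u(x_0) \le 0$ for all $y$ and $\gamma_\del \ge 0$, the integral is $\le 0$, forcing $-\calL^{\rm{qnl}}_\del u(x_0)\ge 0$; combined with the hypothesis $-\calL^{\rm{qnl}}_\del u(x_0)\le 0$ we get $\int_{\R}\gamma_\del(\abs{y-x_0})(u(y)-u(x_0))\,dy = 0$, hence $u(y) = u(x_0) = M$ for a.e.\ $y$ in $x_0 + \mathrm{supp}(\gamma_\del)$, i.e.\ on $(x_0-\del, x_0+\del)$. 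A standard propagation/continuity argument (keep sliding the ball of radius $\del$) then shows $u \equiv M$ on a neighborhood reaching into $\Om_\del = (-1-\del,-1)$, contradicting $M > \max_{\Om_\del} u$. \emph{Second}, if $x_0 > \del$ (pure local region), then by \eqref{force_case3} we have $-u''(x_0) \le 0$, i.e.\ $u''(x_0)\ge 0$ at an interior maximum; the classical strong maximum principle for $-u''\le 0$ on $(\del,1]$ (with $u\in C^2$) then forces $u$ to be constant $\equiv M$ on $[\del,1]$, so in particular $u(1) = M$, contradicting $1\in\Om_\del$ and $M > \max_{\Om_\del}u$ — unless the max over $[\del,1]$ is actually attained at the left endpoint $x=\del$, which pushes us into the third case.

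\emph{Third}, the delicate case is $x_0 \in (0,\del]$ (transitional region), where by \eqref{force_case2}
\[
0 \le -\calL^{\rm{qnl}}_\del u(x_0) = -2\int_{y<0}\gamma_\del(\abs{y-x_0})(u(y)-u(x_0))\,dy - (\om_\del u')'(x_0).
\]
The first term is $\ge 0$ exactly as in the nonlocal case. For the second term I would like to say that at an interior maximum the ``diffusion'' contribution $(\om_\del u')'$ is $\le 0$ in an averaged sense; but since $u$ need only be $C^2$ on $[0,1]$ and $\om_\del$ is merely Lipschitz, one cannot simply evaluate pointwise. The clean way is to integrate \eqref{force_case2} against a suitable test function, or better, to avoid picking a single $x_0$ altogether: replace the pointwise argument on the transitional/local block by the variational (energy) inequality. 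Concretely, I expect the actual proof to handle $(0,1]$ as a single local-type problem with coefficient $\om_\del\ge 0$ degenerating at $0$, matched to the nonlocal block through the value $u(0)$, and to invoke that a nonnegative-coefficient second-order ODE operator on $(0,1]$ together with the nonlocal operator on $(-1-\del,0]$ still obeys a comparison principle.

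The \textbf{main obstacle} is precisely this matching at $x=0$: $u$ may be only continuous there (not differentiable), $\om_\del$ vanishes at $0$, and the operator changes type. The way I would resolve it is: let $x_0$ be a point where $u$ attains $M$; if $x_0 \le 0$ use the nonlocal propagation argument above to conclude $u\equiv M$ near the nonlocal boundary, contradiction; if $x_0 > 0$, note $u\in C^2([0,1])$ so a maximum of $u$ restricted to $[0,1]$ is attained, and if that maximum sits strictly inside $(\del,1)$ the classical argument gives a contradiction, if it sits in $(0,\del]$ the transitional-region identity \eqref{force_case2} forces (via the nonnegativity of the nonlocal term and the fact that $\om_\del > 0$ on $(0,\del]$, so $-(\om_\del u')'(x_0)$ must balance a nonnegative quantity) that $u$ cannot have a strict interior max there either, and if it sits at $x=0$ we are back to a point $x_0\le 0$ handled by the nonlocal case. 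Iterating, the maximum must reach $\Om_\del$, a contradiction. I would flag that making the transitional-region step rigorous — especially if one insists on pointwise rather than weak statements — may require an additional smoothness assumption on $\om_\del u'$ near $0$, or phrasing the whole lemma via the weak comparison principle coming from coercivity of $\qnlbilinear$ (Proposition~\ref{prop_stab}).
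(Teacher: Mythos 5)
Your overall strategy (argue by contradiction at a maximum point, splitting into the nonlocal, transitional and local regimes) is the same as the paper's, and your nonlocal and local cases are essentially fine. But there is a genuine gap at the interface, and it is exactly the point the lemma is designed to handle. The hypothesis is $u\in C([-1-\del,0])\cap C^2([0,1])$, \emph{not} continuity on all of $\overline{\Om\cup\Om_\del}$: the paper states explicitly, just before the lemma, that the maximum principle is needed for functions that may be \emph{discontinuous at $0$}, because the error components $e_\delta^1$ and $e_\delta^2$ in \eqref{eq:errors} can jump there. Your resolution of the matching at $x=0$ --- ``if the maximum sits at $x=0$ we are back to a point $x_0\le 0$ handled by the nonlocal case'' --- fails precisely when $u(0^+)>u(0^-)$: the nonlocal-case argument at $x_0=0$ only sees the left value $u(0^-)$ and says nothing about $u(0^+)$, which is the candidate maximum of $u|_{[0,1]}$. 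The paper devotes a separate third step to this: assuming $u(0^+)$ strictly exceeds $u$ everywhere else, one has $u(0^+)>u(0^-)$, hence by continuity $\int_{y<0}\gamma_\delta(|y-x|)(u(y)-u(x))\,dy<0$ for all sufficiently small $x>0$; combining this with $u'(0^+)\le 0$ (since $u(0^+)\ge u(x)$ for $x>0$) and $\om_\del(0^+)=0$ yields $-\calL^{\rm{qnl}}_\del u(x)>0$ for small $x>0$, a contradiction. Without an argument of this type your proof does not close.

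A secondary point: your stated ``main obstacle'' in the transitional region --- that $(\om_\del u')'$ cannot be evaluated pointwise because $\om_\del$ is ``merely Lipschitz'' --- is not an actual obstacle. By Lemma~\ref{lem:weight}, $\om_\del'(x)=2\int_x^\infty s\gamma_\delta(s)\,ds$ is continuous on $(0,\infty)$, so $(\om_\del u')'=\om_\del' u'+\om_\del u''$ is a perfectly good pointwise expression for $u\in C^2([0,1])$; at an interior maximum of $u|_{[0,1]}$ one has $u'=0$ and $u''\le 0$, hence $-(\om_\del u')'=-\om_\del u''\ge 0$, and the nonlocal term in \eqref{force_case2} is also $\ge 0$ there, which is all the paper uses. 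No weak/variational reformulation is needed. So the place where you anticipated trouble is harmless, while the place you dismissed in one clause (the possible jump at $0$) is where the real work lies.
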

\begin{proof}
First, from $-\calL^{\rm{qnl}}_\del  u (x)\leq  0$ in $(0,1)$ we can show that
\begin{equation}\label{eq:maxproof1}
\max_{x \in(0,1)} u(x)\leq \max_{x\in\{ 0^+\}\cup\{1\}} u(x)\,,
\end{equation}
where $u(0^+)=\lim_{x\to 0, x>0} u(x)$. Indeed, if we assume the opposite is true, namely if $\tilde x\in (0,1)$ is an isolated maximum point, then we
must have $u^\prime(\tilde x)=0$ and $u^\prime(\tilde x)<0$. From the expressions of $\calL^{\rm{qnl}}_\del$ in \eqref{force_case2} and \eqref{force_case3}, we have immediately $-\calL^{\rm{qnl}}_\del  u (\tilde x)>0$, which contradicts the assumption.
 Second, from $-\calL^{\rm{qnl}}_\del  u (x)\leq  0$ in $[-1,0]$ we could show
\begin{equation}\label{eq:maxproof2}
\max_{x \in(-1-\delta,\delta)} u(x)\leq \max_{x\in(-1-\delta,-1)\cup (0,\delta)} u(x)\,.
\end{equation}
The argument is the following. Assume the opposition is true, namely,
\begin{equation*}
\max_{x \in(-1-\delta,\delta)} u(x) > \max_{x\in(-1-\delta,-1)\cup (0,\delta)} u(x),
\end{equation*}
then we could find $x^\ast\in[-1,0]$ such that $u(x^\ast)= \max_{x \in(-1,0)} u(x)$ and
 \begin{equation*}-\calL^{\rm{qnl}}_\del  u (x^\ast)= -\int_{-\delta}^{\delta} \gamma_\delta(|s|)(u(x^\ast+s)-u(x^\ast))ds>0,
 \end{equation*}
 which gives us a contradiction. So $u$ has to satisfy \eqref{eq:maxproof2}.

 Now combine the result of \eqref{eq:maxproof1} and \eqref{eq:maxproof2},
we only need to show  $u(0^+)\leq \max_{x \in \Om_\del } u(x)$. Assume the opposite,
 namely $u(0^+)> u(x)$ for any $x\in [-1-\delta, 0^-] \cup(0,1]$. Then since $u(0^+)>u(0^-)$,
we have  $\int_{y<0}\gamma_{\delta}(\abs{y-x})\left(u(y)-u(x)\right)dy<0$ for sufficiently small $x>0$.
Considering also that $u^\prime(0^+)\leq 0$ (since $u(0^+)> u(x)$ for any $x>0$) and $\om_\del(0^+)=0 $,  we see that for small enough $x>0$,
 \begin{equation*}
 -\calL^{\rm{qnl}}_\del  u (x) = -2\int_{y<0}
 \gamma_{\delta}(\abs{y-x})\left(u(y)-u(x)\right)dy- \om_\del(x) u^{\prime\prime}(x) -  \om_\del^\prime(x)  u^{\prime}(x) >0\,,
 \end{equation*}
 which gives us a contradiction.

Hence, we proved the lemma.
\end{proof}

\begin{theorem} Suppose $u^{\rm{qnl}}_\delta$ and $u_0$ are strong solutions to \eqref{eq:QNLD_steady} and \eqref{eq:LD_steady} respectively.
Assume that $u_0\in C^3(\overline{\Om\cup\Om_\del})$, then
\begin{equation*}
\| u^{\rm{qnl}}_\delta(x)-u_0(x)\|_{L^\infty(\Om)} = O(\delta)\,.
\end{equation*}
\end{theorem}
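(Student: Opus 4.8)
The plan is to run a consistency-plus-stability argument in which stability is supplied by the maximum principle of Lemma~\ref{lem:max_prip} applied to two explicitly chosen barrier functions. Recall that $e_\delta := u^{\rm{qnl}}_\delta - u_0$ vanishes on $\Om_\del$, solves $-\calL^{\rm{qnl}}_\del e_\delta = T_\delta$ with $T_\delta = T^1_\delta + T^2_\delta$, and that (as already established above) $\|T^1_\delta\|_{L^\infty(-1,0)} = O(\delta^2)$ while $\|T^2_\delta\|_{L^\infty(0,\delta)} = O(1)$ with $\mathrm{supp}\,T^2_\delta \subset [0,\delta]$ by \eqref{T2estimate}. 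First I would split $e_\delta = e^1_\delta + e^2_\delta$ as in \eqref{eq:errors} and reduce the theorem to the two bounds $\|e^1_\delta\|_{L^\infty(\Om)} = O(\delta^2)$ and $\|e^2_\delta\|_{L^\infty(\Om)} = O(\delta)$, after which the triangle inequality finishes the proof. The functions $e^1_\delta,e^2_\delta$ are only piecewise smooth and may jump at $0$, which is exactly the situation the refined Lemma~\ref{lem:max_prip} is designed to handle.

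For $e^1_\delta$ the barrier I would use is the genuine quadratic $\phi(x) := (1+\del)^2 - x^2$, kept as a quadratic on all of $(-\infty,1]$ rather than truncated to $0$ on the nonlocal boundary layer. The point of not truncating is that a truncation would introduce a convex corner at $x=-1$ and reverse the sign of $-\calL^{\rm{qnl}}_\del\phi$ there, whereas the untruncated quadratic is nonnegative on $\Om\cup\Om_\del$, bounded by $4$ on $\Om$, and --- since $\phi''\equiv-2$ and $\phi'''\equiv 0$ --- satisfies $-\calL^{\rm{qnl}}_\del\phi(x)=2$ for $x\le0$ and for $x\ge\del$ (by \eqref{force_case1} and \eqref{force_case3}) and $-\calL^{\rm{qnl}}_\del\phi(x)=2a(x)\ge 1$ for $0<x<\del$ (by Lemma~\ref{lem:interfacial}, whose remainder vanishes for a quadratic, together with $a(x)\ge\tfrac12$ from Remark~\ref{rem:equiv_diff}). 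Hence $-\calL^{\rm{qnl}}_\del\bigl(\|T^1_\delta\|_{L^\infty}\,\phi \mp e^1_\delta\bigr)\ge 0$ on $\Om$ and the comparison function is $\ge 0$ on $\Om_\del$, so Lemma~\ref{lem:max_prip} gives $|e^1_\delta|\le \|T^1_\delta\|_{L^\infty}\,\phi\le 4\|T^1_\delta\|_{L^\infty}=O(\del^2)$.

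The estimate for $e^2_\delta$ is the crux, and I expect constructing its barrier to be the main obstacle. Here the truncation error is only $O(1)$, so an $O(1)$-multiple of $\phi$ would dominate it but would yield merely $\|e^2_\delta\|_{L^\infty}=O(1)$; the extra factor of $\del$ must come from exploiting $|\mathrm{supp}\,T^2_\delta|=\del$. Concretely, I would look for a barrier $\psi_\del\ge 0$ on $\Om\cup\Om_\del$ with $\|\psi_\del\|_{L^\infty(\Om)}=O(\del)$, $-\calL^{\rm{qnl}}_\del\psi_\del\ge\|T^2_\delta\|_{L^\infty}$ on $(0,\del)$, and $-\calL^{\rm{qnl}}_\del\psi_\del\ge0$ on $\Om\setminus(0,\del)$; equivalently, the content is the estimate $\bigl\|(-\calL^{\rm{qnl}}_\del)^{-1}\chi_{(0,\del)}\bigr\|_{L^\infty}=O(\del)$. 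A natural candidate has its curvature concentrated in the transitional layer $(0,\del)$ --- so that its values vary by only $O(\del^2)$ there --- is affine with slope $O(\del)$ on the local side $x>\del$ (hence annihilated by the local operator and of sup-norm $O(\del)$), and is arranged near $x=0$ and $x=\del$ so that no spurious convex corner appears. The delicate point is the matching near $x=0$: there $\om_\del(0)=0$, so the transitional operator degenerates to something behaving like $a(x)\psi_\del''+\om_\del'(x)\psi_\del'$ with $\om_\del'$ of size $O(1/\del)$, and it is precisely this degeneracy --- which dictates how a finite local flux is transmitted through the vanishing weight at the interface --- that makes an off-the-shelf choice fail and forces a bespoke construction. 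Once $\psi_\del$ is available, applying Lemma~\ref{lem:max_prip} to $\psi_\del\mp e^2_\delta$ gives $|e^2_\delta|\le\psi_\del\le C\del$, and combining the two bounds yields $\|u^{\rm{qnl}}_\delta - u_0\|_{L^\infty(\Om)}\le\|e^1_\delta\|_{L^\infty}+\|e^2_\delta\|_{L^\infty}=O(\del)$.
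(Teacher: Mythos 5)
Your overall strategy is exactly the paper's: split $e_\delta=e^1_\delta+e^2_\delta$ as in \eqref{eq:errors}, and control each piece via the maximum principle (Lemma~\ref{lem:max_prip}) with a nonnegative barrier that dominates the corresponding truncation error. Your first barrier $\phi(x)=(1+\del)^2-x^2$ is essentially the paper's $\Phi_1$, and that half of the argument is complete (the paper, assuming only $u_0\in C^3$, uses $T^1_\delta=O(\del)$ and the barrier $\del\Phi_1$ to get $|e^1_\delta|=O(\del)$, which is all that is needed; your claimed $O(\del^2)$ would require $C^4$ regularity, but this discrepancy is harmless for the theorem).

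The genuine gap is that you never construct the second barrier, which is the actual content of the proof. You correctly list the desiderata --- $\psi_\del\ge 0$, $\|\psi_\del\|_{L^\infty}=O(\del)$, $-\calL^{\rm{qnl}}_\del\psi_\del\gtrsim 1$ on $(0,\del)$ and $\ge 0$ elsewhere --- and correctly identify the obstruction (the weight $\om_\del$ vanishes at the interface, so one cannot simply dump $O(1)$ concavity at $x=0$), but then defer the ``bespoke construction'' that resolves it. The paper's resolution is the explicit piecewise function \eqref{Phi2}: affine with slope $+\del$ on $(-1-\del,0)$, the cubic $\tfrac{1}{8\del}x^3-\tfrac34 x+\tfrac12\del x+\del+\del^2$ on $[0,2\del]$, and affine with slope $-\del$ on $[2\del,1)$, glued so that $\Phi_2\in C([-1-\del,0])\cap C^2([0,1])$. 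The cubic piece carries an $O(1)$ concavity $\Phi_2''=\tfrac{3x}{4\del}-\tfrac32$ spread over a layer of width $2\del$ (so the function itself only varies by $O(\del)$), and one checks directly from \eqref{force_case2}, using $a(x)\ge\tfrac12$ and the sign of the third-order nonlocal remainder, that $-\calL^{\rm{qnl}}_\del\Phi_2\ge\tfrac38$ on $(0,\del)$; the affine tails are handled by the nonlocal and local operators separately. Without exhibiting such a $\psi_\del$ and verifying the supersolution inequality in the transitional region --- where the operator is genuinely the mixed expression \eqref{force_case2}, not just $a(x)\psi_\del''$ --- the bound $\|(-\calL^{\rm{qnl}}_\del)^{-1}\chi_{(0,\del)}\|_{L^\infty}=O(\del)$ remains an assertion rather than a proof, and the $O(\del)$ rate is not established.
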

\begin{proof}
We will construct barrier functions of nonnegative values on $\Om\cup\Om_\del$ and then estimate $e_\delta^1(x)$ and $e_\delta^2(x)$ defined by \eqref{eq:errors}.
The first barrier function is a simple quadratic function. Take $\Phi_1(x)=-c x^2+4c$, then from the calculations in section \ref{subsec:ghost_force} we know that $-\calL^{\rm{qnl}}_\del   (\delta \Phi_1(x) )\geq c\delta $. For $u_0\in C^3(\overline{\Om\cup\Om_\del})$, we know that $T_\delta^1(x)$ is at least of order $O(\del)$, so by choosing $c$ large enough we could have $c\delta \geq T_\delta^1(x)$.
Now from Lemma  \ref{lem:max_prip} we conclude that
\begin{equation*}
\max_{x \in \Om\cup\Om_\del } (e_\delta^1(x) - \delta \Phi_1(x))   \leq  \max_{x\in\Om_\del } (e_\delta^1(x) - \delta \Phi_1(x))  \leq 0\,,
\end{equation*}
so we have $e_\delta^1(x) \leq \delta \Phi_1(x) \leq 4c
\delta$.
Applying the same arguments to $-e_\delta^1(x) $ we also have
$-e_\delta^1(x) \leq 4c \delta$. Thus $|e_\delta^1(x)|=O(\delta)$.

The second barrier function $\Phi_2(x)$ is more carefully designed in
order to get the estimate of $e_\delta^2(x)$.  The key is to define
$\Phi_2(x)$ such that $\Phi_2 \in C([-1-\del,0])\cap C^2([0,1])$ (so as to use the maximum principle) and it is linearly decaying to zero outside the
interfacial region.  We define the barrier function
$\Phi_2(x)$ to be
 \begin{equation}\label{Phi2}
\Phi_2(x)=
\left\{
\begin{aligned}
&\delta x+\delta+\delta^2  \quad x\in (-1-\delta, 0)\\
&\frac{1}{8\delta} x^3 - \frac{3}{4}x+\frac{1}{2}\del x +\del +\del^2\quad x\in [0, 2\delta]\\
&-\delta x+\delta+2\delta^2    \quad x\in [2\delta, 1) \,.
\end{aligned}
\right.
\end{equation}
One could check that $\Phi_2\in C([-1-\del,0])\cap C^2([0,1])$ and $-\calL^{\rm{qnl}}_\del   ( \Phi_2(x) )\geq 0$ for $x\in(-1,1)$.
In particular, for $x\in(0,\delta)$, after taking Taylor-expansion, we can write
\begin{equation*}
\begin{split}
-\calL^{\rm{qnl}}_\del   ( \Phi_2(x) )
 =&-a(x)\Phi_2^{\prime\prime}(x)-2\int_{-\delta}^{-x} \ga_\del(s)\left( \frac{1}{6} s^3 \Phi_2^{\prime\prime\prime}(x)  ds\right) \\
 =&-a(x) \left(\frac{3}{4} \frac{x}{\del}-\frac{3}{2}\right)- \frac{1}{3}\cdot  \frac{3}{4\del} \int_{-\delta}^{-x} s^3\gamma_\delta(s) ds\\
\geq &\, \frac{3}{4} a(x) +  \frac{1}{4\del} \int_{\delta}^{x} s^3\gamma_\delta(s) ds\geq  \frac{3}{8}\,,
\end{split}
\end{equation*}
where the last inequality comes from the fact that $a(x)\geq\frac{1}{2}$.
Then by  the expression of $T_\delta^2(x)$ in  \eqref{T2estimate}, we could take a $\tilde c>0$ large enough such that $-\calL^{\rm{qnl}}_\del   ( \tilde c\Phi_2(x) )\geq T_\delta^2(x)$, then from the maximum principle we conclude that
\begin{equation*}
\max_{x \in\Om\cup\Om_\del } (e_\delta^2(x) - \tilde c \Phi_2(x))   \leq  \max_{x\in \Om_\del } (e_\delta^2(x) - \tilde c \Phi_2(x))  \leq 0\,.
\end{equation*}
So we have $e_\delta^2(x) \leq \tilde c \Phi_2(x) \leq \tilde c(\delta+\delta^2)$. Using the same arguments to $-e_\delta^2(x) $ we also have
$-e_\delta^2(x) \leq \tilde c(\delta+\delta^2)$. Thus $|e_\delta^2(x)|=O(\delta)$.
\end{proof}

\section{Numerical discretization and numerical examples}\label{sec_num}
In this section, we will develop a finite difference discretization
and consider several benchmark problems to check the accuracy and
stability performance of the numerical scheme. The patch-test
consistency, symmetry and positive definiteness of the finite
difference matrix are validated numerically.

\subsection{Numerical scheme} \label{subsec_scheme}
We use finite difference for spatial discretization. The domain $\Omega=(-1, 1)$ is divided into $2N$ uniform subintervals
with equal length $h=1/N$ and grid points $-1 = x_0 <x_1<\dots<x_{2N}=1$ so the interface grid point is $x_N = 0$. Homogeneous
Dirichlet boundary condition $u = 0$ is assumed on the boundary domain $\Omega_\del=(-\delta-1,-1)\cup \{1\}$.
 We use the scaling invariance of second moments of $\gamma_{\delta}$ and
local diffusion and approximate the quasinonlocal diffusion
operator $\calL^{\rm{qnl}}_\del $ in the three regimes.
The finite difference scheme we uses is not only a convergent scheme for the QNL problem with fixed $\delta$, but also a convergent scheme for the local
differential equation with fixed ratio between $\delta$ and $h$, thus an asymptotically compatible scheme, a notion developed in \cite{Tian2013a, Tian2014a}.

For simplicity of discussion, we always assume that $\delta/h=r$ with
$r$ being an integer in the following. We discuss in order the discretization scheme in the nonlocal region, transitional region and local region respectively. Special
treatment is used in the transitional region for the scheme to be asymptotically compatible.
\begin{itemize}
\item Case I (nonlocal region): for $i\in \{ 0,1,\cdots, N\}$,
\end{itemize}
\begin{equation}\label{fdm_case1}
\begin{split}
\calL^{\rm{qnl}}_\del  u(x_i)
=&2\int_{-\delta}^{\delta}
\left( u(x_i+s)-u(x_i)\right) \gamma_{\delta}(s)ds\\
=&2\int_{0}^{\delta}
\left(\frac{ u(x_i+s)-2u(x_i)+u(x_i-s)}{s^2}\right) s^2\gamma_{\delta}(s)ds\\
\approx&2\sum_{j=1}^{r}\left(\frac{ u(x_{i+j})-2u(x_i)+u(x_{i-j}) }{(jh)^2}\right)
\int_{(j-1)h}^{jh}s^2\gamma_{\delta}(s)ds\,.
\end{split}
\end{equation}

\begin{itemize}
\item Case II (transitional region): for $i\in \{ N+1,N+2, \cdots, N+r\}$
\end{itemize}
\beq \label{fdm_case2_eq1}
\begin{split}
\calL^{\rm{qnl}}_\del u(x_i)
=&2\int_{x_i}^\delta
 \gamma_{\delta}(\abs{s})\left(u(x_i-s)-u(x_i)\right)ds+2\left( \int_{x_i}^{\delta}  s\gamma_{\delta}(s)ds\right) u^\prime(x_i) \\
 +& \left(2 \int_0^{x_i} s^2\gamma_\delta(|s|)ds  +2x_i \int_{x_i}^\delta s\gamma_\delta(|s|)ds \right) u^{\prime\prime}(x_i).
\end{split}
\eeq
\begin{itemize}
\item[]
Now we split the nonlocal integral term into diffusion part and convection part:
\begin{equation*}
\begin{split}
2\int_{x_i}^\delta &
 \gamma_{\delta}(\abs{s})\left(u(x_i-s)-u(x_i)\right)ds  \\
 &\quad = \int_{x_i}^\delta \gamma_{\delta}(\abs{s})\left(u(x_i+s)-2u(x_i)+u(x_i-s)\right)ds\\
 &\qquad \quad -\int_{x_i}^\delta \gamma_{\delta} \left( u(x_i+s)-u(x_i-s) \right) ds\,.
 \end{split}
\end{equation*}
From here we derive the discretization for $\calL^{\rm{qnl}}_\del u(x_i)$:
\end{itemize}

\beq \label{fdm_case2_eq2}
\begin{split}
\calL^{\rm{qnl}}_\del& u(x_i)
\approx \sum_{j=x_i/h}^{r} \frac{ u(x_{i+j})-2u(x_i)+u(x_{i-j}) }{(jh)^2}\int_{(j-1)h}^{jh}s^2\gamma_{\delta}(s)ds \\
- &\sum_{j=x_i/h}^{r} \frac{ u(x_{i+j})-u(x_{i-j}) }{jh}\int_{(j-1)h}^{jh}s\gamma_{\delta}(s)ds \\
&\qquad \quad \qquad + 2\left( \int_{x_i}^{\delta}  s\gamma_{\delta}(s)ds\right) \frac{u(x_{i+1})-u(x_{i})}{h} \\
+& \left(2 \int_0^{x_i} s^2\gamma_\delta(|s|)ds  +2x_i \int_{x_i}^\delta s\gamma_\delta(|s|)ds \right)  \frac{u(x_{i+1})-2u(x_i)+u(x_{i-1})}{h^2}.
\end{split}
\eeq
\begin{itemize}
\item Case III (local region): for $i\in \{N+r+1,\cdots, 2N\}$,
\end{itemize}
\begin{equation}\label{fdm_case3}
\begin{split}
\calL^{\rm{qnl}}_\del  u(x_i)
=& u^{\prime\prime}(x_i)
\approx  \frac{u(x_{i+1})-2u(x_i)+u(x_{i-1})}{h^2}\,.
\end{split}
\end{equation}

 \begin{remark}
 The finite difference discretization described above is a first order scheme with respect to $h$ for fixed horizon $\del$ to the QNL equation \eqref{eq:QNLD_steady}, as well as
 a first order scheme for fixed ratio $r$ between $\del$ and $h$ to the local equation \eqref{eq:LD_steady}.
We split the convection and diffusion parts in \eqref{fdm_case2_eq1} to balance the convection from
nonlocal and local contributions. The resulting discretized expression \eqref{fdm_case2_eq2} will be asymptotically compatible to the local equation.  Otherwise, direct discretization of \eqref{fdm_case2_eq1} will lead to artificial convention terms and
thus it will cause numerical inconsistency and instability on the interfacial regions. {{To demonstrate this, we compute the nonlocal-local coupling model \eqref{eq:QNLD_steady} with external force \eqref{eq:QNLD_steady_force} and interface $x^*=1/2$ by the direct discretization and the compatible scheme \eqref{fdm_case2_eq2}, respectively. The results are plotted in Figure~\ref{Fig:DvsC}. Notice that the exact gradient is not zero at interface $x^*=1/2$, and thus the artificial convection due to direct discretization causes divergence in the computations.

\begin{figure}[htbp]
\begin{center}
\subfigure[Solution error vs $h$]{\includegraphics[width= 0.3\textwidth]{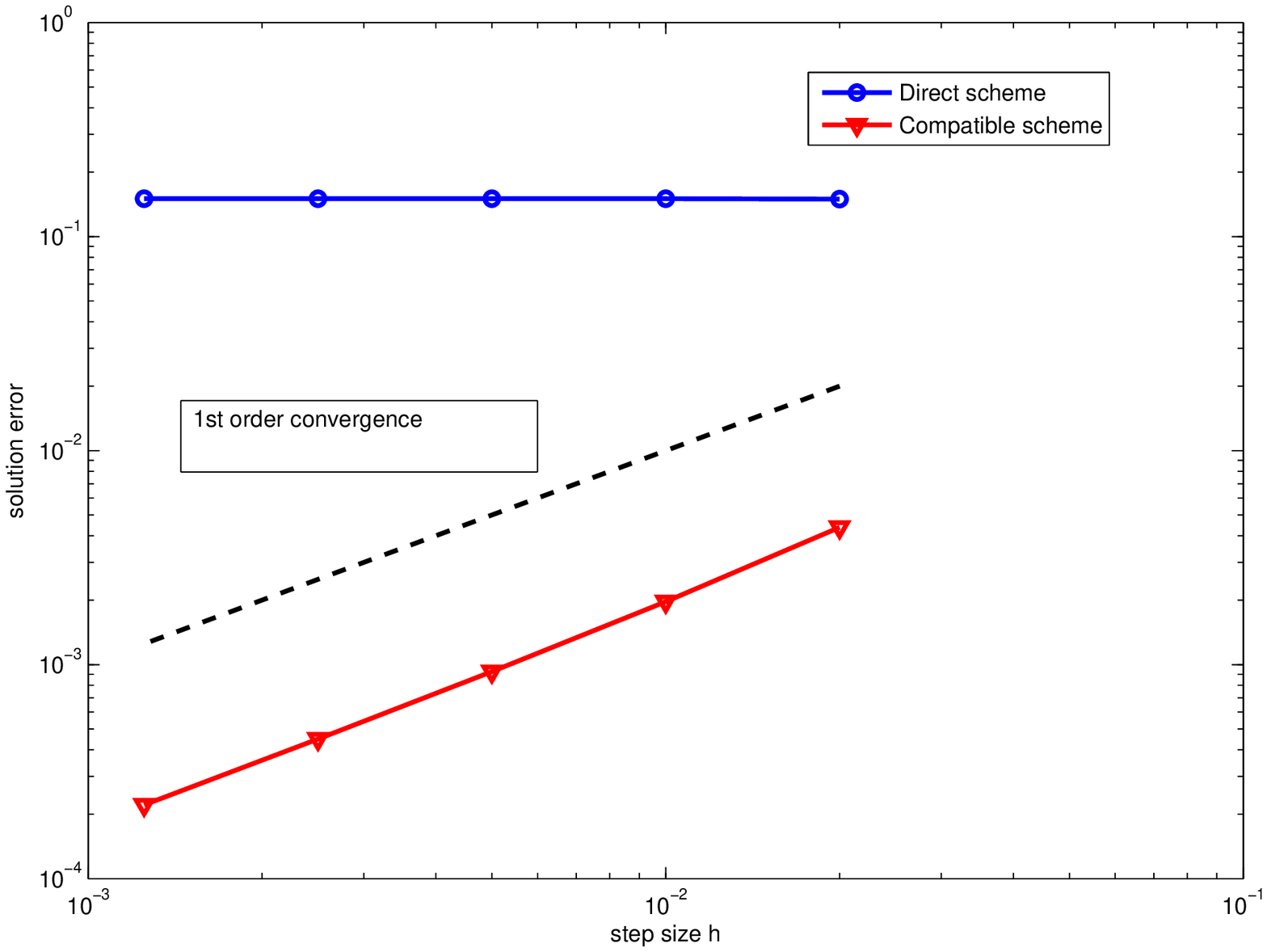}}
\quad
\subfigure[Gradient error vs $h$]{\includegraphics[width=0.3\textwidth ]{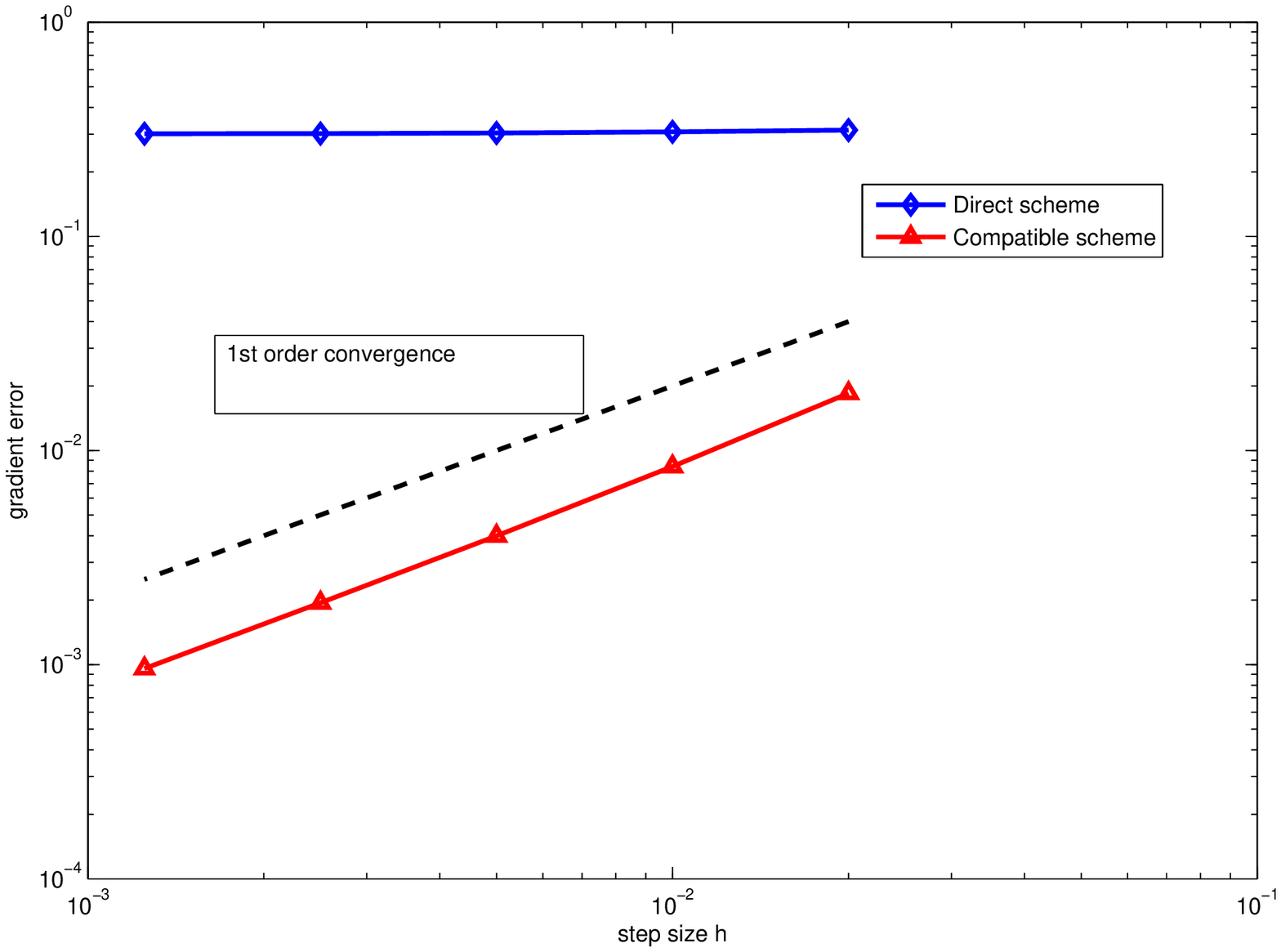}}
\quad
\subfigure[Gradient plots]{\includegraphics[width=0.3\textwidth]{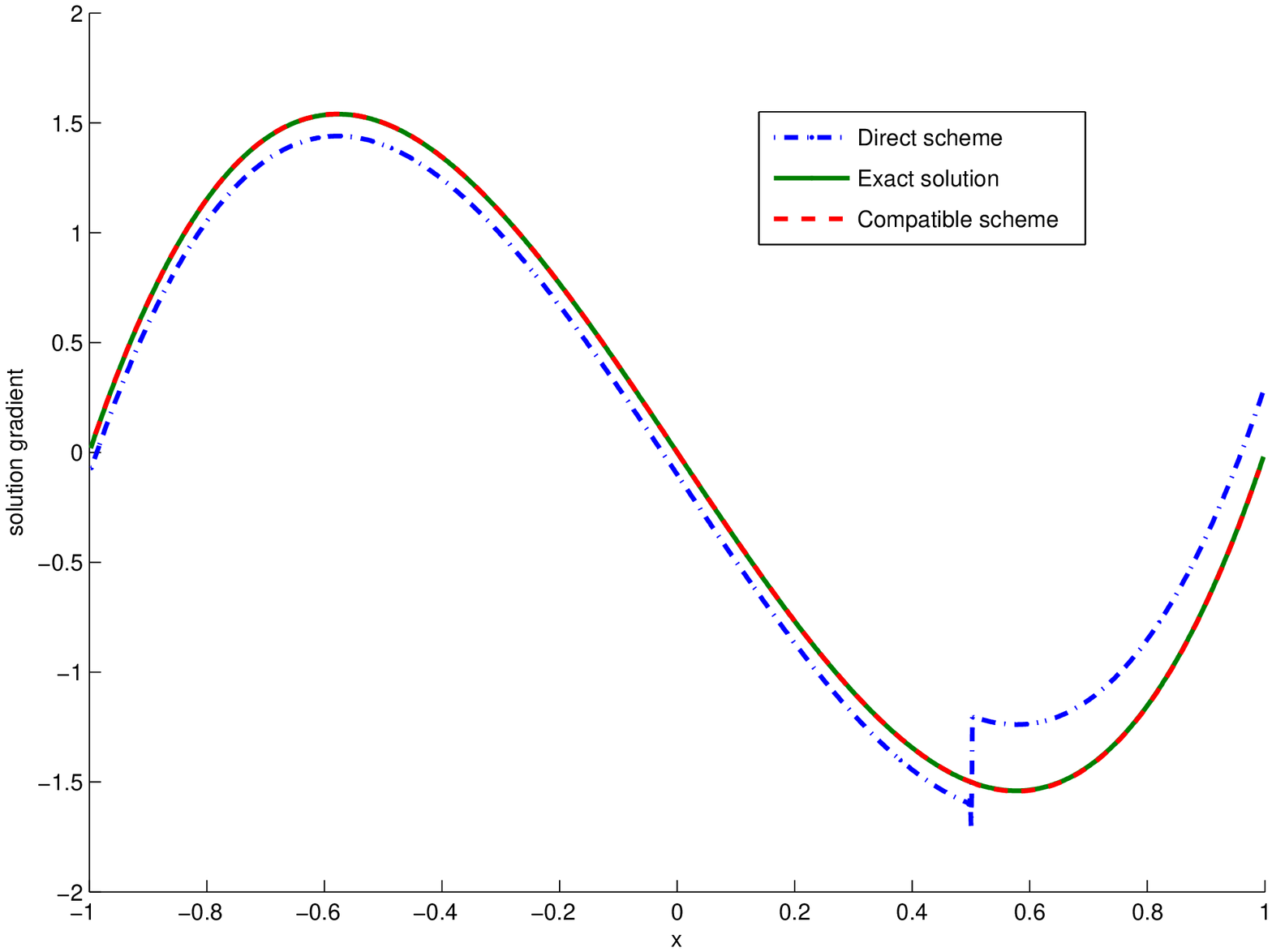}}
\vspace{-0.14 in}
\caption{\small{Plots of the nonlocal-local coupling model \eqref{eq:QNLD_steady} with external force \eqref{eq:QNLD_steady_force}
and interface $x^*=1/2$ computed by the direct method and the compatible scheme \eqref{fdm_case2_eq2}, respectively.
Kernel function is chosen to be $\gamma_{\delta}(x)=\frac{1}{\delta^2|x|}\chi_{(-\del,\del)}(x)$. The ratio between $\del$ and $h$ is fixed to be $\del=3h$.
The exact gradient is not zero at interface $x^*=1/2$, hence the artificial convection due to direct discretization leads to divergence in the computations.
 }}\label{Fig:DvsC}
\end{center}
\end{figure}
}}

\end{remark}

 \begin{remark}
 For the general case that the interface is at $x^\ast \neq 0$, we have the following formulas in replace of equation \eqref{fdm_case2_eq1}.
 If $x^\ast$ is at the left side of the transitional region, then \eqref{fdm_case2_eq1} is replace by
 \begin{equation*}
 \begin{split}
\calL^{\rm{qnl}}_\del u(x_i)
=&2\int_{x_i-x^\ast}^\delta
 \gamma_{\delta}(\abs{s})\left(u(x_i-s)-u(x_i)\right)ds+2\left( \int_{x_i-x^\ast}^{\delta}  s\gamma_{\delta}(s)ds\right) u^\prime(x_i) \\
 +& \left(2 \int_0^{x_i-x^\ast} s^2\gamma_\delta(|s|)ds  +2(x_i-x^\ast) \int_{x_i-x^\ast}^\delta s\gamma_\delta(|s|)ds \right) u^{\prime\prime}(x_i)\,.
\end{split}
 \end{equation*}
 If $x^\ast$ is at the right side of the transitional region, then \eqref{fdm_case2_eq1} is replace by
  \begin{equation*}
 \begin{split}
\calL^{\rm{qnl}}_\del u(x_i)
=&2\int_{x^\ast-x_i}^\delta
 \gamma_{\delta}(\abs{s})\left(u(x_i+s)-u(x_i)\right)ds-2\left( \int_{x^\ast-x_i}^{\delta}  s\gamma_{\delta}(s)ds\right) u^\prime(x_i) \\
 +& \left(2 \int_0^{x^\ast-x_i} s^2\gamma_\delta(|s|)ds  +2(x^\ast-x_i) \int_{x^\ast-x_i}^\delta s\gamma_\delta(|s|)ds \right) u^{\prime\prime}(x_i)\,.
\end{split}
 \end{equation*}

\end{remark}

\subsection{Numerical experiments}
We solve the QNL problem \eqref{eq:QNLD_steady} with right hand side $f$ to be
\begin{equation}\label{eq:QNLD_steady_force}
f(x)=-12 x^2+4\,.
\end{equation}
The exact solution for the limiting local diffusion problem \eqref{eq:LD_steady} is
\begin{equation}\label{exact_local_sol}
u_{0}=(1-x)^2(1+x)^2\quad \text{for }x\in\Omega.
\end{equation}

We adopt the discretization scheme described in section \ref{subsec_scheme} and
compute the QNL solution with the ratio between $\delta$ and spatial step size $h$ to be fixed.
Two types of kernels are used with one being $\gamma_{\delta}(x)=\frac{3}{2\delta^3}\chi_{(-\del,\del)}(x)$ and another being $\gamma_{\delta}(x)=\frac{1}{\delta^2 |x|}\chi_{(-\del,\del)}(x)$. We compute first the $L^{\infty}$ difference between the QNL solution and the local solution and then the $L^{\infty}$ difference of the gradients which are approximated by second order central finite difference at the mesh points.
First order convergences with respect to $h$ are observed in both cases.
The results are listed in Table~\ref{table1} and \ref{table2}. {More careful studies on errors in other norms
and more effective gradient recovery techniques, like those proposed in \cite{DuTao2016}
for nonlocal problems,  will be studied in the future.}

\begin{table}[htp!]
\begin{center}
\begin{tabular}{|c|c|c|c|c|}
\hline
{$h$} & {$\|u_\del^{\rm{qnl}}-u_0\|_{L^{\infty}}$} &Order
&{  $\|(u_\del^{\rm{qnl}}-u_0)^\prime\|_{L^{\infty}}$} & Order \\
\hline
$1/50$ & $  1.56e$-$2$ & $-$ & $1.91e$-$2$&  $-$\\
\hline
$1/100$ & $ 8.07e$-$3$ & $0.95$ & $9.61e$-$3$ & $0.99$\\
\hline
$1/200$ & $ 4.10e$-$3$ & $0.98$ & $4.82e$-$3$& $0.99$\\
\hline
$1/400$ & $  2.06e$-$3$ & $0.99$ & $2.42e$-$3$ & $1.00$\\
\hline
$1/800$ & $  1.04e$-$3$ & $0.99$ & $1.21e$-$3$&   $1.00$ \\
\hline

\end{tabular}
\smallskip
\caption{ $L^{\infty}$ differences of solutions $u_\del^{\rm{qnl}}$ to $u_0$ and their gradients.
We fix $\delta=3h$ and the kernel function is $\gamma_{\delta}(x)=\frac{3}{2\delta^3}\chi_{(-\del,\del)}(x)$.  }\label{table1}
\end{center}
\end{table}

\begin{table}[htp!]
\begin{center}
\begin{tabular}{|c|c|c|c|c|}
\hline
{$h$} & {$\|u_\del^{\rm{qnl}}-u_0\|_{L^{\infty}}$} &Order
&{  $\|(u_\del^{\rm{qnl}}-u_0)^\prime\|_{L^{\infty}}$} & Order \\
\hline
$1/50$ & $  1.19e$-$2$ & $-$ & $1.80e$-$2$&  $-$\\
\hline
$1/100$ & $ 6.19e$-$3$ & $0.95$ & $9.13e$-$3$ & $0.98$\\
\hline
$1/200$ & $ 3.14e$-$3$ & $0.97$ & $4.59e$-$3$& $0.99$\\
\hline
$1/400$ & $  1.59e$-$3$ & $0.99$ & $2.30e$-$3$ & $1.00$\\
\hline
$1/800$ & $  7.97e$-$4$ & $0.99$ & $1.15e$-$3$&   $1.00$ \\
\hline

\end{tabular}
\smallskip
\caption{$L^{\infty}$ differences of solutions $u_\del^{\rm{qnl}}$ to $u_0$ and their gradients.
We fix $\delta=3h$ and the kernel function is $\gamma_{\delta}(x)=\frac{1}{\delta^2|x|}\chi_{(-\del,\del)}(x)$.}\label{table2}
\end{center}
\end{table}

\subsection{Local-nonlocal-local coupling}
Volumetric constraints for nonlocal models often cause non-physical
boundary layer issues, as shown in Figure \ref{Fig:LtoNtoL_demo}
$(a)$.  We could fix the boundary layer problem by coupling the
nonlocal models with local models and remove the volume constraints
completely.  Figure \ref{Fig:LtoNtoL_demo} $(b)$ shows the solution of
the local-nonlocal-local coupling with interfaces at
$x^a=\frac{-1}{2}$ and $x^b=\frac{1}{2}$.  We see that the coupling
method removes the artificial boundary layer caused by volume
constraints with classical local Dirichlet boundary conditions
imposed.
\begin{figure}[htbp]
\begin{center}
\subfigure[Nonlocal-local coupling model]{\includegraphics[height=  4.5 cm, width= 5.7 cm]{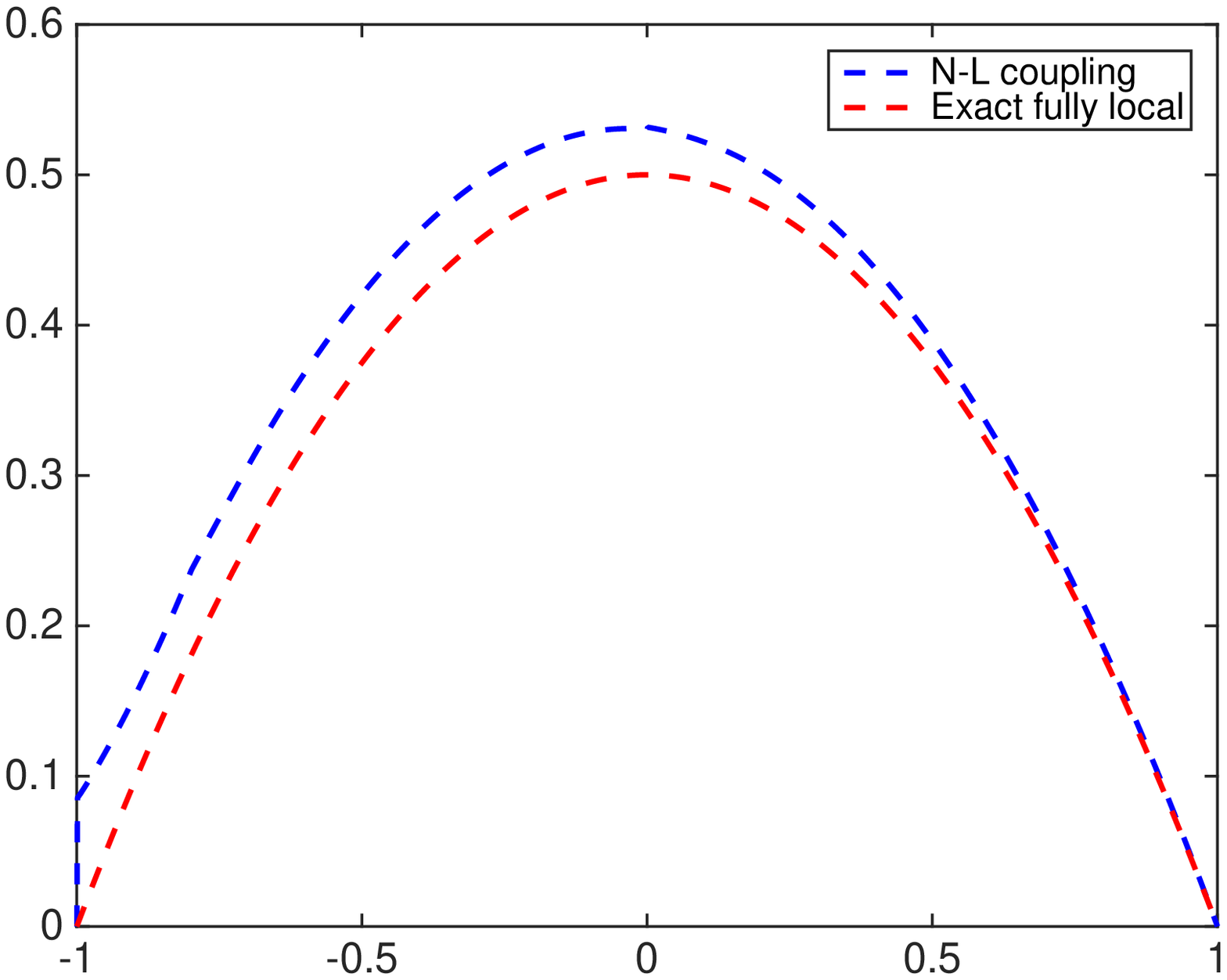}}
\qquad
\subfigure[Local-nonlocal-local coupling model]{\includegraphics[height=4.5  cm, width = 5.7 cm]{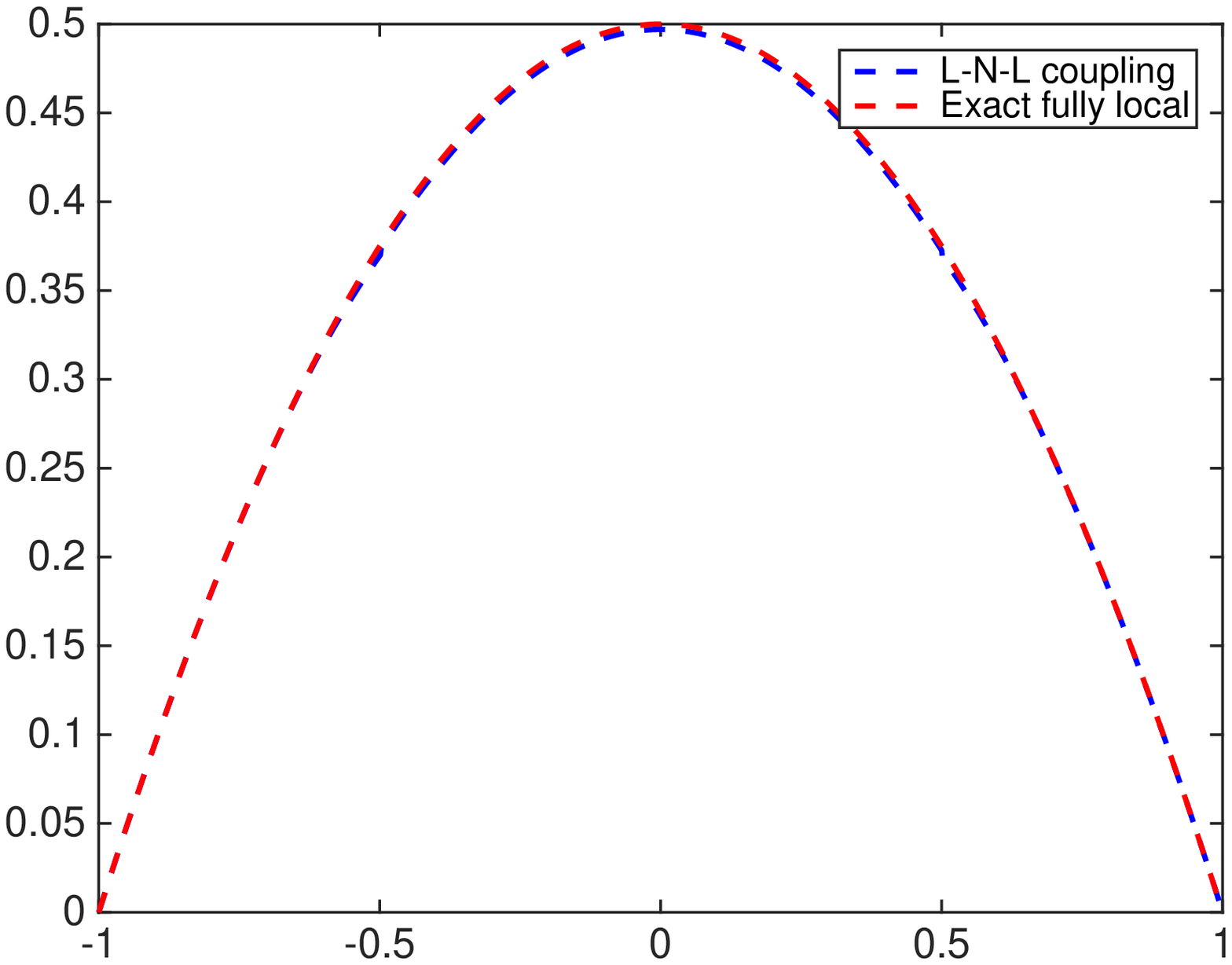}}
\vspace{-0.14 in}
\caption{\small{Plots of solutions to nonlocal-local coupling model, local-nonlocal-local coupling model and fully local local model with homogeneous Dirichlet boundary condition and right hand side $f\equiv 1$.  Kernel function is chosen to be $\gamma_{\delta}(x)=\frac{3}{2\delta^3}\chi_{(-\del,\del)}(x)$. The nonlocal-local coupling model has interface at $x=0$. The
local-nonlocal-local coupling model has interfaces at $x^a=\frac{-1}{2}$ and $x^b=\frac{1}{2}$. The mesh size is $h=1/800$,
the horizon size of nonlocal interaction is $\delta=0.2$.
The nonlocal-local coupling model displays non-physical boundary layer at the nonlocal side, whereas the result of
local-nonlocal-local removes the boundary layer.
 }}\label{Fig:LtoNtoL_demo}
\end{center}
\end{figure}

Next we consider the following singular external forces:
\begin{equation}\label{singular_force}
f(x)=\frac{(1-x^2)(1+x^2)}{|x-x^*|},\quad x^*=h/2.
\end{equation}
The solutions for fully nonlocal, local-nonlocal-local coupling and classical local models are plotted in Figure~\ref{Figure:defect_external_force}.
We can see that the local-nonlocal-local coupling not only captures the singular behavior of the nonlocal solution at $x^\ast$, but also
matches with the local solution at two sides of the bar $(-1,1)$.

\begin{figure}[htp!]
\centering
\includegraphics[height =5.5 cm]{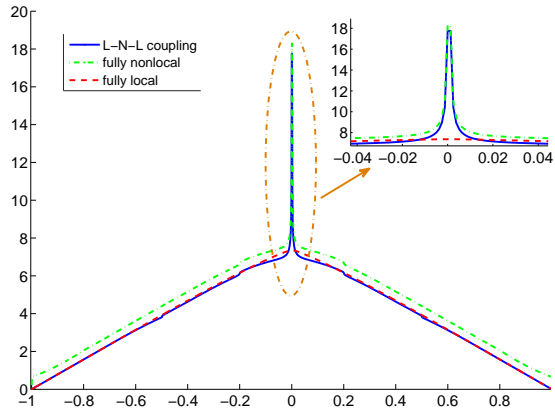}
\caption{The solutions are plotted with mesh size $h=1/800$,
and horizon size $\delta=0.2$. We fix the local-nonlocal-local coupling with interfaces at $x^a=\frac{-1}{2}$ and $x^b=\frac{1}{2}$.  Kernel function is chosen to be $\gamma_{\delta}(x)=\frac{3}{2\delta^3}\chi_{(-\del,\del)}(x)$ }\label{Figure:defect_external_force}
\end{figure}

\section{Conclusion}\label{sec_conclusion}
By extending the idea of ``geometric reconstruction'' proposed in
\cite{E:2006,LiLu2016}, we developed a top-down quasinonlocal coupling
method to study the nonlocal-to-local (NtL) diffusion problem in one
dimensional space. This new coupling framework removes interfacial
inconsistency and maintains all physical properties at local continuum
PDE levels, whereas none of existing coupling methods for
nonlocal-to-local problems satisfies all of these properties.
We proved the well-posedness of the coupling
problem by a quasinonlocal version of the Poincar\'e inequality and
established rigorous estimate of the modeling error by the maximum
principle. Furthermore, we proposed a first order finite difference
numerical discretization and confirmed the analysis by several
numerical tests.  The coupling formulation also removes  artificial
boundary effects caused by the fully nonlocal model when only
classical Dirichlet boundary conditions are imposed.
Although our discussions here have focused on the scalar one dimensional model
problems, it is natural to investigate whether similar ideas can be
developed for systems of equations and for problems defined in
multi-dimensions. {Such generalization is indeed possible, partly
because of the fact that the nonlocal diffusion models considered here
are based on pairwise interactions, the cases
that have been explored in the atomistic-to-continuum coupling methods, see for example \cite{Shapeev2012a,Shapeev2012b}.
Further investigations will be carried out in our follow-up
 works along this direction and for nonlocal problems possibly involving more general interactions.  }



\bibliographystyle{abbrv}
\bibliography{QNLcouple}

\end{document}